\documentclass[12pt, reqno]{amsart}
\usepackage{amsmath, amsthm, amscd, amsfonts, amssymb, graphicx, color}
\usepackage[bookmarksnumbered, colorlinks, plainpages]{hyperref}
\hypersetup{colorlinks=true,linkcolor=red, anchorcolor=green, citecolor=cyan, urlcolor=red, filecolor=magenta, pdftoolbar=true}
\input{mathrsfs.sty}

\textheight 23.5truecm \textwidth 16truecm
\setlength{\oddsidemargin}{0.35in}\setlength{\evensidemargin}{0.35in}
\setlength{\topmargin}{-.5cm}

\newtheorem{theorem}{Theorem}[section]
\newtheorem{lemma}[theorem]{Lemma}
\newtheorem{proposition}[theorem]{Proposition}
\newtheorem{corollary}[theorem]{Corollary}
\theoremstyle{definition}
\newtheorem{definition}[theorem]{Definition}
\newtheorem{example}[theorem]{Example}

\theoremstyle{remark}

\numberwithin{equation}{section}
\makeatletter
\newcommand*{\rom}[1]{\expandafter\@slowromancap\romannumeral #1@}
\makeatother
\begin{document}

\title[Extensions of the Hilbert-multi-norm in Hilbert $C^*$-modules]{Extensions of the Hilbert-multi-norm in Hilbert $C^*$-modules}

\author[S. Abedi, M.S. Moslehian]{Sajjad Abedi$^1$ \MakeLowercase{and} Mohammad Sal Moslehian$^2$}

\address{$^1$Department of Pure Mathematics, Ferdowsi University of Mashhad, P. O. Box 1159, Mashhad 91775, Iran}
\email{sajjad.abedi@mail.um.ac.ir; sajjadabedi2022@gmail.com}

\address{$^2$Department of Pure Mathematics, Center of Excellence in Analysis on Algebraic Structures (CEAAS), Ferdowsi University of Mashhad, P. O. Box 1159, Mashhad 91775, Iran.}
\email{moslehian@um.ac.ir; moslehian@yahoo.com}

\renewcommand{\subjclassname}{\textup{2020} Mathematics Subject Classification}
\subjclass[]{Primary 46L08; Secondary: 46L05, 46B15, 47L10.}

\keywords{Hilbert $C^*$-module; Hilbert $C^*$-multi-norm; decomposition; orthonormal basis.} 

\begin{abstract}
Dales and Polyakov introduced a multi-norm $\left( \left\|\cdot\right\|_n^{(2,2)}:n\in\mathbb{N}\right)$ based on a Banach space $\mathscr{X}$ and showed that it is equal with the Hilbert-multi-norm $\left( \left\|\cdot\right\|_n^{\mathscr{H}}:n\in\mathbb{N}\right)$ based on an infinite-dimensional Hilbert space $\mathscr{H}$. We enrich the theory and present three extensions of the Hilbert-multi-norm for a Hilbert $C^*$-module $\mathscr{X}$. We denote these multi-norms by $\left( \left\|\cdot\right\|_n^{\mathscr{X}}:n\in\mathbb{N}\right)$, $\left( \left\|\cdot\right\|_n^{*}:n\in\mathbb{N}\right)$, and $\left( \left\|\cdot\right\|_n^{\mathcal{P}\left(\mathfrak{A} \right) }:n\in\mathbb{N}\right)$. We show that $\left\|x\right\|_n^{\mathcal{P}\left(\mathfrak{A} \right) }\geq\left\|x\right\|_n^{\mathscr{X}}\leq \left\|x\right\|_n^{*}$ for each $x\in\mathscr{X}^n$. In the case when $\mathscr{X}$ is a Hilbert $\mathbb{K}\left(\mathscr{H}\right)$-module, for each $x\in\mathscr{X}^n$, we observe that $\left\|\cdot\right\|_n^{\mathcal{P}\left(\mathfrak{A} \right)}=\left\|\cdot\right\|_n^{\mathscr{X}}$. Furthermore, if $\mathscr{H}$ is separable and $\mathscr{X}$ is infinite-dimensional, we prove that $\left\|x\right\|_n^{\mathscr{X}}=\left\|x\right\|_n^{*}$. Among other things, we show that small and orthogonal decompositions with respect to these multi-norms are equivalent. Several examples are given to support the new findings.
\end{abstract}
\maketitle
\section{Introduction and preliminaries}

Suppose that $\mathfrak{A}$ is a $C^*$-algebra. Let $\mathscr{X}$ be a (right) \emph{Hilbert $C^*$-module} over $\mathfrak{A}$. A closed submodule $\mathscr{Y}$ of $\mathscr{X}$ is said to be \emph{orthogonally complemented} if $\mathscr{X}=\mathscr{Y}\oplus \mathscr{Y}^{\perp}$, where $\mathscr{Y}^{\perp}=\left\lbrace y\in\mathscr{X}:\left\langle x,y\right\rangle=0, x\in\mathscr{Y}\right\rbrace$. Let $\mathscr{Y}$ and $\mathscr{Z}$ be closed submodules of $\mathscr{X}$. If $\mathscr{Z}\subseteq\mathscr{Y}^{\perp}$, then we denote it by $\mathscr{Y}\perp\mathscr{Z}$. For $\left| x \right|= \left\langle x, x \right\rangle^{1/2}$, the $\mathfrak{A}$-valued triangle inequality $\left|x+y\right|\leq\left|x \right|+\left|y\right|$ may not be valid, but the Cauchy--Schwarz inequality $\left|\left\langle x, y \right\rangle \right|^2\leq\left\| x \right\|^2\left|y \right|^2$ holds; see \cite[Chapter 1]{lance}. A $C^*$-algebra $\mathfrak{A}$ is a Hilbert $\mathfrak{A}$-module with the inner product $\left\langle a,b\right\rangle=a^*b$ for each $a,b\in\mathfrak{A}$. The direct sum $\mathit{l}_n^2(\mathscr{X})$ of $n$-copies of a Hilbert $\mathfrak{A}$-module $\mathscr{X}$ is a Hilbert $\mathfrak{A}$-module under the inner product given by 
\begin{align*}
	\left\langle(x_1,\dots,x_n),(y_1,\dots,y_n)\right\rangle_{\mathit{l}_n^2(\mathscr{X})}=\sum_{i=1}^{n} \left\langle x_i,y_i\right\rangle\,.
\end{align*} 
 We shall abbreviate $\mathit{l}_n^2(\mathbb{C})$ by $\mathit{l}_n^2$. A map $T : \mathscr{X}\rightarrow \mathscr{Y}$ is said to be \emph{adjointable} if there exists a map $T^* : \mathscr{Y}\rightarrow \mathscr{X}$ such that $\left\langle Tx,y \right\rangle=\left\langle x,T^*y \right\rangle$ for all $x \in \mathscr{X}$ and $y \in \mathscr{Y}$. The set of all adjointable maps from $\mathscr{X}$ into $\mathscr{Y}$ is denoted by $\mathcal{L}(\mathscr{X},\mathscr{Y})$. Let $x \in \mathscr{X}$ and let $y \in \mathscr{Y}$. Set ${\theta}_{y,x}:\mathscr{X} \rightarrow \mathscr{Y}$ by ${\theta}_{y,x}(z)\mapsto y\left\langle x,z\right\rangle \quad (z \in \mathscr{X})$. It is easy to verify that ${\theta}_{y,x} \in \mathcal{L}(\mathscr{X},\mathscr{Y})$ and $\left( {\theta}_{y,x}\right)^*={\theta}_{x,y}$. The norm-closed linear span of $\left\lbrace {\theta}_{y,x}: x \in \mathscr{X}, y \in \mathscr{Y} \right\rbrace $ is denoted by $\mathcal{K}(\mathscr{X},\mathscr{Y})$, the space of ``compact'' operators. We shall abbreviate $\mathcal{L}(\mathscr{X},\mathscr{X})$ and $\mathcal{K}(\mathscr{X},\mathscr{X})$ by $\mathcal{L}(\mathscr{X})$, $\mathcal{K}(\mathscr{X})$, respectively. It is known that the space $\mathcal{L}(\mathscr{X})$ is a $C^*$-algebra. The identity map in $\mathcal{L}(\mathscr{X})$ is denoted by $I_{\mathscr{X}}$. We refer the readers to \cite{FRA, lance, Manu} for more information about Hilbert $C^*$-modules.

 Dales and Polyakov \cite{2012} introduced the notion of multi-norm based on a normed space
$\mathcal{X}$. A \emph{multi-norm} based on $\mathcal{X}$ is a sequence $(\left\|\cdot\right\|_n : n\in \mathbb{N})$ such that $\left\|\cdot\right\|_n$ is a norm on	${\mathcal{X}}^n$, for each $x_0 \in \mathcal{X}$,	$\left\|x_0\right\|_1=\left\|x_0\right\|$, 	and the following Axioms hold for every $n \in \mathbb{N}$ and $x=\left( x_1,\dots,x_n\right) \in {\mathcal{X}}^n$:
\begin{enumerate}
\item[(A1)]
For each permutation $\sigma$ on $\left\lbrace 1,\dots,n\right\rbrace $, it holds that $\left\|\left( x_{\sigma(1)},\dots,x_{\sigma(n)}\right) \right\|_n = \left\| x \right\|_n$;
\item[(A2)]
$\left\|\left( \alpha_1x_1,\dots,\alpha_nx_n\right)\right\|_n\leq (\max_{1\leq i\leq n } \left| \alpha_i \right| )\left\| x \right\|_n$ for all $\alpha_1,\dots,\alpha_n \in \mathbb{C}$;
\item[(A3)]
 $\left\|\left( x_1,\dots,x_{n-1},0\right) \right\|_n=\left\|\left( x_1,\dots,x_{n-1}\right) \right\|_{n-1}$;
\item[(A4)]
$\left\|\left( x_1,\dots,x_{n-1},x_{n-1}\right)\right\|_n\leq\left\|\left( x_1,\dots,x_{n-2},x_{n-1}\right) \right\|_{n-1}$.
\end{enumerate}
In this case, $\left( \left\|\cdot\right\|_n: n\in\mathbb{N} \right)$ is called a \emph{multi-norm} based on $\mathcal{X}$, and $\left( \left( {\mathcal{X}}^n,\left\|\cdot\right\|_n\right):n\in\mathbb{N} \right) $ is a \emph{multi-normed space}. The \emph{minimum multi-norm} based on a normed space $\mathcal{X}$ is the sequence $\left(\left\|\cdot\right\|_n:n\in\mathbb{N}\right)$ defined by
$\left\|(x_1,\ldots,x_{n})\right\|_{n}=\max_{1\leq i\leq n}\left\|x_i\right\|$. Dales and Polyakov introduced \emph{the weak $2$-summing norm} based on a normed space $\mathcal{X}$ \cite[Definition 3.15]{2012} such that for each $n \in \mathbb{N}$ and $x=(x_1,\dots, x_n)\in \mathcal{X}^n$,
\begin{equation*}
\mu_{2,n}(x_1,\dots,x_n)=\sup\left\lbrace \left( \sum_{i=1}^{n}| f(x_i)|^2\right)^{1/2}:f\in\mathcal{X}^{\prime}\right\rbrace,
\end{equation*}
where $\mathcal{X}^{\prime}$ is the Banach dual space of $\mathcal{X}$. Furthermore, they defined \emph{the $(2,2)$-multi-norm} based on $\mathcal{X}$ by
\begin{equation}\label{(2,2)}
	\left\|x\right\|^{(2,2)}_n=\sup\left\{\left(\sum_{i=1}^n\left|f_i(x_i) \right|^{2}\right)^{1/2}: 
	\mu_{2,n}(f_1,\dots,f_n)\leq 1\right\},
\end{equation}
in which the supremum is taken over $f_1,\dots,f_n \in \mathcal{X}^{\prime}$. It deduces from \cite[Theorem 4.1]{2012} that $\left( \left(\mathcal{X}^n,\left\|\cdot\right\|_n^{(2,2)}\right):n\in\mathbb{N}\right)$ is a multi-normed space. We refer the readers to \cite{2012, blasco} for more information about multi-norms.

An extension of $\mu_{2,n}(x_1,\dots,x_n)$ in the setting of Hilbert $C^*$-modules is presented in \cite[Proposition 3.4(1)]{sajjad} as follows:
\begin{align}\label{mos4}
\mu^*_{n}(x_1,\dots,x_n)&=\sup\left\lbrace \left( \sum_{i=1}^{n} |T(x_i)^*|^2\right)^{1/2} : T \in{\mathcal{L}(\mathscr{X}, \mathfrak{A})}_{[1]}\right\rbrace\nonumber\\&
=\sup_{y\in\mathscr{X}_{[1]}}\left\|\left( \sum_{i=1}^{n}\left|\left\langle x_i,y\right\rangle\right|^2\right)^{1/2}\right\|,
\end{align}
where $\mathscr{X}_{[1]}$ is the norm-closed unit ball of $\mathscr{X}$. Moreover, $\mu^*_{n}$ enjoys the following properties:
	 \begin{equation}\label{powernorm}
\max_{1\leq i\leq n}\left\|x_i\right\|\leq\mu^*_{n}(x_1,\dots,x_n)\leq\sum_{i=1}^{n}\left\|x_i\right\|;
	 \end{equation}
	 \begin{equation}\label{Proposition 3.4(1)}
	\mu^*_{n}(x_1a_1,\dots,x_na_n)\leq\max_{1\leq i\leq n}\left\|a_i\right\|\mu^*_{n}(x_1,\dots,x_n)\quad\left( a_1,\dots,a_n\in\mathfrak{A}\right) ;
	\end{equation}
	 \begin{equation}\label{Lemma3.7}
		\mu^*_{n}(x_1,\dots,x_n)=\sup\left\lbrace\left\| \sum_{i=1}^{n}x_ia_i\right\| : a_1,\dots,a_n\in\mathfrak{A},\left\|\sum_{i=1}^{n}a_i^*a_i \right\|\leq 1\right\rbrace;
	\end{equation}
	 \begin{equation}\label{Proposition 3.6}
			\mu^*_{n}(x_1,\dots,x_n)=\min\left\lbrace\lambda>0:\left( \sum_{i=1}^{n}\left| \left\langle y_i,x\right\rangle\right|^2\right)^{1/2}\leq\lambda\left|x\right|\,\ \text{for all} x\in\mathscr{X}\right\rbrace.
	\end{equation}

In the next section, we introduce a new version of $\left( \left\|\cdot\right\|^{(2,2)}:n\in\mathbb{N}\right) $ based on a Hilbert $C^*$-module $\mathscr{X}$ as follows:
\begin{equation*}
\left\|\left( x_1,\dots,x_n\right) \right\|_n^{*}=\sup\left\{\left\| \sum_{i=1}^n \left|\left\langle x_i,y_i\right\rangle\right|^2\right\|^{1/2}: 	\mu^*_{n}(y_1,\dots,y_n)\leq 1\right\}.
\end{equation*}
We show that $\left( \left(\mathscr{X}^n,\left\|\cdot\right\|_n^{*}\right):n\in\mathbb{N}\right)$ is a multi-normed space based on $\mathscr{X}$.

In \cite[Theorem 4.15]{2012}, Dales and Polyakov presented the notion of \emph{Hilbert multi-norm} based on Hilbert spaces. We extend the Hilbert multi-norm to that of \emph{Hilbert $C^*$-multi-norm} based on $\mathscr{X}$ as follows:
\begin{equation}\label{Hilbert multi norm}
\left\|\left(x_1,\dots,x_n\right) \right\|^{\mathscr{X}}_n=\sup\left\|P_1x_1 +\dots+P_nx_n\right\|\quad\left( x_1,\dots,x_n\in\mathscr{X}\right),
\end{equation}
 where the supremum is taken over all families $(P_i:1\leq i\leq n)$ of mutually orthogonal projections in $\mathcal{L}(\mathscr{X})$ summing to $I_{\mathscr{X}}$ by allowing the possibility that $P_j=0$ for some $1 \leq j \leq n $. It immediately follows that $\left(\left\|\cdot\right\|^{\mathscr{X}}_n:n\in\mathbb{N}\right)$ is a multi-norm based on $\mathscr{X}$. We call it the \emph{Hilbert $C^*$-multi-norm} based on $\mathscr{X}$. Assume that $\mathscr{H}$ is a Hilbert space with o-dim$\left(\mathscr{H}\right)\geq n$, where o-dim$(\mathscr{H})$ stands for the Hilbert dimension; that is, the cardinal number of any orthonormal basis of $\mathscr{H}$. It is known from \cite[Theorem 4.19]{2012} that 
 \begin{equation}\label{dim}
\left\|x\right\|_n^{\mathscr{H}}= \left\|x\right\|_n^{*}\quad \left( x\in \mathscr{H}^n\right).
 \end{equation} 
 Equality \eqref{dim} is true for an arbitrary Hilbert space $\mathscr{H}$ in the case where $n=1, 2, 3$ \cite[p. 44]{hilbert}. However, it deduces from  \cite[Theorem 4.11]{hilbert} and \cite[Theorem 4.19]{2012} that the equality is not true in general  in the case when $n\geq 4$ and o-dim$\left(\mathscr{H}\right)\geq 3$.

 We write $\mathcal{P}\left(\mathfrak{A}\right) $ for the set of pure states on $\mathfrak{A}$. If $\mathfrak{A}$ is commutative, then $\mathcal{P}\left(\mathfrak{A}\right)$ is indeed the character space of $\mathfrak{A}$ equipped with the weak$\rm{^*}$ topology. 

Let $\tau\in\mathcal{P}\left(\mathfrak{A}\right)$. Set $\mathcal{N}_{\tau}=\left\lbrace x\in\mathscr{X}:\tau\left( \left\langle x,x\right\rangle\right) =0 \right\rbrace $. We consider an inner product $\langle\cdot,\cdot\rangle:\mathscr{X}/\mathcal{N}_{\tau}\times\mathscr{X}/\mathcal{N}_{\tau}\rightarrow\mathbb{C}$ such that $\left\langle x+\mathcal{N}_{\tau},y+\mathcal{N}_{\tau}\right\rangle=\tau\left( \left\langle x,y\right\rangle\right)$. The Hilbert completion of ${\mathscr{X}}/{\mathcal{N}_{\tau}}$ is denoted by $\mathscr{H}_{\tau}$. 

 For $x=\left( x_1,\dots,x_n\right) \in\mathscr{X}^n$, we define
\begin{equation*}
	\left\|x\right\|_n^{\mathcal{P}\left(\mathfrak{A} \right)}:=\sup_{\tau\in\mathcal{P}\left(\mathfrak{A}\right)}\left\|\left(x_1+\mathcal{N}_{\tau},\dots,x_n+\mathcal{N}_{\tau}\right)\right\|_n^{\mathscr{H}_{\tau}}.
\end{equation*}
We observe that $\left(	\left\|\cdot\right\|_n^{\mathcal{P}\left(\mathfrak{A} \right)}:n \in \mathbb{N} \right) $ is a multi-norm based on $\mathscr{X}$. In the next section, we see that $\left\|x\right\|_n^{\mathcal{P}\left(\mathfrak{A} \right) }\geq\left\|x\right\|_n^{\mathscr{X}}\leq \left\|x\right\|_n^{*}$ for each $x\in\mathscr{X}^n$. In the case when $\mathscr{X}$ is a Hilbert $\mathbb{K}\left(\mathscr{H}\right)$-module, we observe that $\left\|x\right\|_n^{\mathcal{P}\left(\mathfrak{A} \right)}=\left\|x\right\|_n^{\mathscr{X}}$ for each $x\in\mathscr{X}^n$. Moreover, if $\mathscr{H}$ is separable and	 $\text{o-dim}\left(\mathscr{X}\right)\geq n\left( \text{o-dim}\left(\mathscr{H}\right)\right)$, then we arrive at 	$\left\|x\right\|_n^{\mathscr{X}}=\left\|x\right\|_n^{*}$.

\section{Multi-norms based on a Hilbert $C^*$-module}
In this section, we extend the $(2,2)$-multi-norm \eqref{(2,2)} to set up of Hilbert $C^*$-modules. We start our work with the following definition.
\begin{definition}
Let $\mathscr{X}$ be a Hilbert $C^*$-module and let $x=(x_1,\dots,x_n)\in\mathscr{X}^n$. Then we set
\begin{equation}\label{mos3}
\left\|x\right\|_n^{*}:=\sup\left\{\left\| \sum_{i=1}^n\left|\left\langle y_i,x_i\right\rangle\right|^{2}\right\| ^{1/2}: 	\mu^*_{n}(y_1,\dots,y_n)\leq 1\right\}.
\end{equation}
\end{definition}
\begin{theorem}
	Suppose that $\mathscr{X}$ is a Hilbert $\mathfrak{A}$-module. Then $\left( \left(\mathscr{X}^n,\left\|\cdot\right\|_n^{*} \right):n\in\mathbb{N}\right) $ is a multi-normed space.
\end{theorem}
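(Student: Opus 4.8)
The plan is to check, one by one, that each $\|\cdot\|_n^*$ is a norm on $\mathscr{X}^n$ with $\|x_0\|_1^*=\|x_0\|$, and that Axioms (A1)--(A4) hold; the whole construction is the Hilbert $C^*$-module counterpart of the $(2,2)$-multi-norm of Dales--Polyakov \cite[Section~4]{2012}, with $\mu_n^*$ in the role of the weak $2$-summing norm. Throughout write $y=(y_1,\dots,y_n)$ and call it \emph{admissible} if $\mu^*_n(y_1,\dots,y_n)\le 1$. The reformulation that drives everything is that, for a fixed admissible $y$,
\[
\Bigl\|\sum_{i=1}^n|\langle y_i,x_i\rangle|^2\Bigr\|^{1/2}=\bigl\|(\langle y_1,x_1\rangle,\dots,\langle y_n,x_n\rangle)\bigr\|_{l^2_n(\mathfrak{A})},
\]
so $x\mapsto\bigl\|\sum_i|\langle y_i,x_i\rangle|^2\bigr\|^{1/2}$ is a seminorm on $\mathscr{X}^n$ (a $\mathbb{C}$-linear map into $l^2_n(\mathfrak{A})$ composed with its norm), and hence $\|\cdot\|_n^*$, a supremum of such seminorms, is a seminorm. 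That it is finite and in fact a norm follows from the two-sided estimate $\max_{1\le i\le n}\|x_i\|\le\|x\|_n^*\le\bigl(\sum_{i=1}^n\|x_i\|^2\bigr)^{1/2}$: for the upper bound, $\mu^*_n(y)\le 1$ forces $\max_i\|y_i\|\le 1$ by \eqref{powernorm}, after which $\|\langle y_i,x_i\rangle\|\le\|x_i\|$ and subadditivity of $\|\cdot\|$ on positive elements finish it; for the lower bound, test with $y=(0,\dots,y_j,\dots,0)$, $y_j=x_j/\|x_j\|$, which has $\mu^*_n(y)=\|y_j\|=1$ by \eqref{powernorm}. Finally, since $\mu^*_1(y_1)=\|y_1\|$ by \eqref{powernorm}, the Cauchy--Schwarz inequality gives $\|x_0\|_1^*=\sup_{\|y_1\|\le 1}\|\langle y_1,x_0\rangle\|=\|x_0\|$.

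Axioms (A1) and (A2) are routine: for (A1), reindex the $y_i$ and invoke the symmetry of $\mu^*_n$ visible in \eqref{mos4}; for (A2), use $|\langle y_i,\alpha_ix_i\rangle|^2=|\alpha_i|^2|\langle y_i,x_i\rangle|^2$ and $\sum_i|\alpha_i|^2|\langle y_i,x_i\rangle|^2\le(\max_i|\alpha_i|^2)\sum_i|\langle y_i,x_i\rangle|^2$ in $\mathfrak{A}^+$. For (A3), the inequality $\|(x_1,\dots,x_{n-1},0)\|_n^*\le\|(x_1,\dots,x_{n-1})\|_{n-1}^*$ rests on $\mu^*_{n-1}(y_1,\dots,y_{n-1})\le\mu^*_n(y_1,\dots,y_n)$, and the reverse inequality on $\mu^*_n(w_1,\dots,w_{n-1},0)=\mu^*_{n-1}(w_1,\dots,w_{n-1})$; both are immediate from the second form of $\mu^*_n$ in \eqref{mos4} (monotonicity of the norm on positive elements, and the vanishing of the extra term $|\langle 0,z\rangle|^2$).

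The crux is (A4): $\|(x_1,\dots,x_{n-1},x_{n-1})\|_n^*\le\|(x_1,\dots,x_{n-1})\|_{n-1}^*$. Following the scalar argument of \cite[Theorem~4.1]{2012}, I will ``merge'' the last two coordinates of an admissible $y=(y_1,\dots,y_n)$. Put $c^2:=|\langle y_{n-1},x_{n-1}\rangle|^2+|\langle y_n,x_{n-1}\rangle|^2\in\mathfrak{A}^+$ and, for $\varepsilon>0$,
\[
w_{n-1}:=\bigl(y_{n-1}\langle y_{n-1},x_{n-1}\rangle+y_n\langle y_n,x_{n-1}\rangle\bigr)(c^2+\varepsilon)^{-1/2}\in\mathscr{X},
\]
where $(c^2+\varepsilon)^{-1/2}$ is computed in the unitization $\widetilde{\mathfrak{A}}$; the $\varepsilon$-regularization substitutes for the scalar normalization used in the classical proof, which is unavailable because $c^2$ need not be invertible. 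With $R:=[\langle x_{n-1},y_{n-1}\rangle\ \ \langle x_{n-1},y_n\rangle]$ (so $RR^*=c^2$) and $\xi:=(\langle y_{n-1},z\rangle,\langle y_n,z\rangle)$ viewed as a column over $\widetilde{\mathfrak{A}}$, the identity $\langle ua,v\rangle=a^*\langle u,v\rangle$ gives $\langle w_{n-1},z\rangle=(RR^*+\varepsilon)^{-1/2}R\xi$ and hence $|\langle w_{n-1},z\rangle|^2=\xi^*R^*(RR^*+\varepsilon)^{-1}R\,\xi$ for every $z\in\mathscr{X}$; since $R^*(RR^*+\varepsilon)^{-1}R=(R^*R+\varepsilon)^{-1}R^*R\le I$, this yields $|\langle w_{n-1},z\rangle|^2\le|\langle y_{n-1},z\rangle|^2+|\langle y_n,z\rangle|^2$. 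Consequently, by \eqref{mos4}, $\mu^*_{n-1}(y_1,\dots,y_{n-2},w_{n-1})\le\mu^*_n(y)\le 1$, so $(y_1,\dots,y_{n-2},w_{n-1})$ is admissible for $\|(x_1,\dots,x_{n-1})\|_{n-1}^*$.

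To close, note $\langle w_{n-1},x_{n-1}\rangle=c^2(c^2+\varepsilon)^{-1/2}$, whence $|\langle w_{n-1},x_{n-1}\rangle|^2=(c^2)^2(c^2+\varepsilon)^{-1}\to c^2$ in norm as $\varepsilon\downarrow 0$ (because $t-t^2/(t+\varepsilon)\le\varepsilon$ on $[0,\infty)$). Therefore the quantity $\bigl\|\sum_{i=1}^{n-2}|\langle y_i,x_i\rangle|^2+|\langle y_{n-1},x_{n-1}\rangle|^2+|\langle y_n,x_{n-1}\rangle|^2\bigr\|^{1/2}$ associated to $y$ equals $\lim_{\varepsilon\downarrow 0}\bigl\|\sum_{i=1}^{n-2}|\langle y_i,x_i\rangle|^2+|\langle w_{n-1},x_{n-1}\rangle|^2\bigr\|^{1/2}\le\|(x_1,\dots,x_{n-1})\|_{n-1}^*$, and taking the supremum over admissible $y$ proves (A4). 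The one genuinely delicate point is this merging construction in (A4)---choosing the right merged vector and coping with the possible non-invertibility of $c^2$ via the $\varepsilon$-regularization; all other verifications are bookkeeping with \eqref{powernorm} and the formula \eqref{mos4} for $\mu^*_n$.
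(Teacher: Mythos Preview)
Your proof is correct and follows essentially the same strategy as the paper: the only substantive step is (A4), where you merge the last two test vectors using an $\varepsilon$-regularized inverse, exactly as the paper does. The minor technical differences---you regularize with $(c^2+\varepsilon)^{-1/2}$ rather than the paper's $(c+\varepsilon_0)^{-1}$, and you bound $|\langle w_{n-1},z\rangle|^2$ via the operator inequality $R^*(RR^*+\varepsilon)^{-1}R\le I$ whereas the paper invokes Cauchy--Schwarz in $l^2_2(\mathfrak{A})$---are cosmetic variants of the same idea.
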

\begin{proof}
	Obviously, $\left(\mathscr{X}^n,\left\|\cdot\right\|_n^{*} \right) $
	satisfies Axioms (A1), (A2), and (A3). So it is sufficient to prove Axiom (A4). To this end, take $n\in \mathbb{N}$. Pick $x_1,\dots,x_n\in\mathscr{X}$ and $\varepsilon >0$. Then there are elements $y_1,\dots,y_{n+1}\in\mathscr{X}$ such that 
	$\mu^*_{n+1}(y_1,\dots,y_{n+1})\leq 1$ and
	\begin{align}\label{mos1}
		\left(\left\|\left( x_1,\dots,x_n,x_n\right)\right\|^{*}_{n+1}\right) ^2 -\frac{\varepsilon}{2}<\left\|\sum_{i=1}^{n-1}\left|\left\langle y_i,x_i\right\rangle\right| ^{2}+\left| \left\langle y_n,x_n\right\rangle\right| ^{2} 
		+\left| \left\langle y_{n+1},x_n\right\rangle\right| ^{2}\right\|.
	\end{align}
	Suppose that $\mathfrak{A}$ is unital. Set $a=\left(\left| \left\langle y_n,x_n\right\rangle\right| ^{2} 
	+\left| \left\langle y_{n+1},x_n\right\rangle\right| ^{2}\right)^{1/2}$. Also, set
	\begin{equation}
		y=(a+\varepsilon_0)^{-1}\left\langle x_n,y_n\right\rangle y_n+(a+\varepsilon_0)^{-1}\left\langle x_n,y_{n+1}\right\rangle y_{n+1},
	\end{equation}
	where $\varepsilon_0={\varepsilon}/\left( 4\left\|a \right\|+\varepsilon\right)$. Let $x\in\mathscr{X}$. It deduces from the Cauchy--Schwarz inequality that
	\begin{align*}
		\left|\left\langle y,x\right\rangle\right|^2&=
		\left| (a+\varepsilon_0)^{-1}\left\langle x_n,y_n\right\rangle \left\langle y_n,x\right\rangle+(a+\varepsilon_0)^{-1}\left\langle x_n,y_{n+1}\right\rangle\left\langle y_{n+1},x\right\rangle\right|^2\\&=\left| \left\langle \left(\left\langle y_n,x_n\right\rangle(a+\varepsilon_0)^{-1},\left\langle y_{n+1},x_n\right\rangle(a+\varepsilon_0)^{-1} \right),\left(\left\langle y_{n},x\right\rangle,\left\langle y_{n+1},x\right\rangle \right) \right\rangle_{\mathit{l}^2_2(\mathfrak{A})}\right|^2 \\&\leq\left\|\left(\left\langle y_n,x_n\right\rangle(a+\varepsilon_0)^{-1},\left\langle y_{n+1},x_n\right\rangle(a+\varepsilon_0)^{-1} \right) \right\|_{\mathit{l}^2_2(\mathfrak{A})}^2 \left|\left(\left\langle y_{n},x\right\rangle,\left\langle y_{n+1},x\right\rangle \right) \right|_{\mathit{l}^2_2(\mathfrak{A})}^2 \\&=\left\|(a+\varepsilon_0)^{-1}a^2 (a+\varepsilon_0)^{-1}\right\| \left(\left| \left\langle y_n,x\right\rangle\right| ^{2} 
		+\left| \left\langle y_{n+1},x\right\rangle\right| ^{2}\right)\\&\leq\left| \left\langle y_n,x\right\rangle\right| ^{2} 
		+\left| \left\langle y_{n+1},x\right\rangle\right| ^{2}.\tag{by using the functional calculus for $a$} 
	\end{align*}
	It deduces that $\mu^*_{n}(y_1,\dots,y_{n-1},y)\leq \mu^*_{n+1}(y_1,\dots,y_{n+1})\leq 1$. Furthermore, we observe that 
	\begin{equation}
		\left\langle y,x_n\right\rangle=
		(a+\varepsilon_0)^{-1}\left(\left| \left\langle y_n,x_n\right\rangle\right| ^{2} 
		+\left| \left\langle y_{n+1},x_n\right\rangle\right| ^{2}\right) =(a+\varepsilon_0)^{-1}a^2.
	\end{equation}
	Since $a\geq0$, we have $a-(a+\varepsilon_0)^{-1}a^2\leq \varepsilon_0$ and so
	\begin{equation}\label{mos2}
		a^2-\left((a+\varepsilon_0)^{-1}a^2\right)^2\leq \varepsilon_0\left(a+(a+\varepsilon_0)^{-1}a^2\right)\leq2\left\|a \right\|\varepsilon_0\leq\frac{\varepsilon}{2}.
	\end{equation}
	We therefore arrive at $	\left| \left\langle y,x_n\right\rangle\right|^2
	=\left((a+\varepsilon_0)^{-1}a^2\right)^2\geq a^2-{\varepsilon}/{2}$. Thus
	\begin{align*}
		\left(\left\|\left( x_1,\dots,x_n,x_n\right)\right\|^{*}_{n+1}\right) ^2 -\varepsilon&\leq\left\|\sum_{i=1}^{n-1}\left|\left\langle y_i,x_i\right\rangle\right| ^{2}+\left| \left\langle y_n,x_n\right\rangle\right| ^{2} 
		+\left| \left\langle y_{n+1},x_n\right\rangle\right| ^{2}\right\|-\frac{\varepsilon}{2}\\
		&\qquad\qquad\qquad\qquad\qquad\qquad \qquad\qquad ({\rm by \eqref{mos1} })\\
		&\leq\left\|\sum_{i=1}^{n-1}\left|\left\langle y_i,x_i\right\rangle\right| ^{2}+a^2\right\|-\frac{\varepsilon}{2}\\
		&\leq\left\|\sum_{i=1}^{n-1}\left|\left\langle y_i,x_i\right\rangle\right| ^{2}+ \left| \left\langle y,x_n\right\rangle\right|^{2}+\frac{\varepsilon}{2}\right\|-\frac{\varepsilon}{2}\\
		&\qquad\qquad\qquad\qquad\qquad\qquad \qquad\qquad ({\rm by \eqref{mos2} })\\
		&\leq\left\|\sum_{i=1}^{n-1}\left|\left\langle y_i,x_i\right\rangle\right| ^{2}+\left| \left\langle y,x_n\right\rangle\right|^{2}\right\|\leq \left\|\left( x_1,\dots,x_n\right)\right\|^{*2}_{n}.
	\end{align*}
	Thus, we arrive at (A4). In the case when $\mathfrak{A}$ is non-unital, we use the minimal unitization $\mathfrak{A}\oplus \mathbb{C}$ of $\mathfrak{A}$, and consider $\mathscr{X}$ as a Hilbert $\mathfrak{A}\oplus \mathbb{C}$-module via the right action $x\left(a,\alpha\right)=xa+\alpha x$, where $x\in\mathscr{X}$, $a\in\mathfrak{A}$, and $\alpha\in\mathbb{C}$. 
\end{proof}
\begin{proposition}
	Assume that $\mathscr{X}$ is a Hilbert $\mathfrak{A}$-module. Let $\mathscr{Y}$ be a closed submodule of $\mathscr{X}$. Pick $x = (x_1,\dots, x_n)\in F^n$. 
	Then $\left\|x\right\|_{n,\mathscr{Y}}^{*} \geq \left\|x\right\|_{n,\mathscr{X}}^{*}$. If $\mathscr{Y}$ is complemented in $\mathscr{X}$, then $\left\|x\right\|_{n,\mathscr{Y}}^{*}=\left\|x\right\|_{n,\mathscr{X}}^{*}$.
\end{proposition}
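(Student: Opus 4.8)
The plan is to establish the two claims separately: the inequality $\|x\|_{n,\mathscr{Y}}^{*}\ge\|x\|_{n,\mathscr{X}}^{*}$ is the heart of the statement, and the complemented case follows once I adjoin the reverse inequality $\|x\|_{n,\mathscr{Y}}^{*}\le\|x\|_{n,\mathscr{X}}^{*}$. Here $x=(x_1,\dots,x_n)\in\mathscr{Y}^n$ is fixed, and I write $\mu^{*}_{n,\mathscr{Y}}$ and $\mu^{*}_{n,\mathscr{X}}$ for the quantity \eqref{mos4} computed over the unit balls $\mathscr{Y}_{[1]}$ and $\mathscr{X}_{[1]}$, respectively; since $\mathscr{Y}_{[1]}\subseteq\mathscr{X}_{[1]}$, formula \eqref{mos4} immediately gives $\mu^{*}_{n,\mathscr{Y}}(z)\le\mu^{*}_{n,\mathscr{X}}(z)$ for every $z=(z_1,\dots,z_n)\in\mathscr{Y}^n$. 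The one device I would prepare in advance is the canonical embedding $\mathcal{K}(\mathscr{Y})\hookrightarrow\mathcal{L}(\mathscr{X})$ sending $\theta_{u,v}$ (with $u,v\in\mathscr{Y}$) to the operator $w\mapsto u\langle v,w\rangle$ on $\mathscr{X}$; checking on rank-one operators and extending by continuity shows this is an isometric $*$-homomorphism. For $e\in\mathcal{K}(\mathscr{Y})$ I denote its image by $\tilde e\in\mathcal{L}(\mathscr{X})$, so that $\tilde e$ is self-adjoint whenever $e$ is, $\|\tilde e\|=\|e\|$, $\tilde e(\mathscr{X})\subseteq\mathscr{Y}$, and $\tilde e|_{\mathscr{Y}}=e$.

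For the first inequality I would fix an arbitrary $y=(y_1,\dots,y_n)\in\mathscr{X}^n$ with $\mu^{*}_{n,\mathscr{X}}(y)\le 1$ and approximate it into $\mathscr{Y}$. Choose a contractive, positive approximate identity $(e_\lambda)$ for the $C^*$-algebra $\mathcal{K}(\mathscr{Y})$; by the standard theory of Hilbert $C^*$-modules it acts as an approximate identity on $\mathscr{Y}$, that is, $e_\lambda v\to v$ for every $v\in\mathscr{Y}$ (see \cite{lance}). Put $z_i^{\lambda}:=\tilde e_\lambda y_i\in\mathscr{Y}$ and $z^{\lambda}=(z_1^{\lambda},\dots,z_n^{\lambda})$. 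Using $\tilde e_\lambda=\tilde e_\lambda^{*}$ and $\tilde e_\lambda|_{\mathscr{Y}}=e_\lambda$, for each $v\in\mathscr{Y}$ one has $\langle z_i^{\lambda},v\rangle=\langle y_i,e_\lambda v\rangle$; since $e_\lambda v\in\mathscr{Y}\subseteq\mathscr{X}$ and $\|e_\lambda v\|\le\|v\|$, formula \eqref{mos4} yields $\mu^{*}_{n,\mathscr{Y}}(z^{\lambda})\le\mu^{*}_{n,\mathscr{X}}(y)\le 1$, so $z^{\lambda}$ is admissible in \eqref{mos3} for $\|x\|_{n,\mathscr{Y}}^{*}$. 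On the other hand $\langle z_i^{\lambda},x_i\rangle=\langle y_i,e_\lambda x_i\rangle\to\langle y_i,x_i\rangle$ because $e_\lambda x_i\to x_i$, whence $\|\sum_{i=1}^{n}|\langle z_i^{\lambda},x_i\rangle|^{2}\|^{1/2}\to\|\sum_{i=1}^{n}|\langle y_i,x_i\rangle|^{2}\|^{1/2}$. Thus $\|x\|_{n,\mathscr{Y}}^{*}\ge\|\sum_{i=1}^{n}|\langle y_i,x_i\rangle|^{2}\|^{1/2}$, and taking the supremum over all admissible $y$ delivers $\|x\|_{n,\mathscr{Y}}^{*}\ge\|x\|_{n,\mathscr{X}}^{*}$.

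For the complemented case I would exploit the genuine projection $P\in\mathcal{L}(\mathscr{X})$ onto $\mathscr{Y}$ to prove the reverse inequality. Let $z=(z_1,\dots,z_n)\in\mathscr{Y}^n$ be admissible for $\|x\|_{n,\mathscr{Y}}^{*}$, i.e. $\mu^{*}_{n,\mathscr{Y}}(z)\le 1$. Since $z_i=Pz_i$ and $P=P^{*}$, for every $w\in\mathscr{X}$ we have $\langle z_i,w\rangle=\langle z_i,Pw\rangle$ with $Pw\in\mathscr{Y}$ and $\|Pw\|\le\|w\|$; hence \eqref{mos4} gives $\mu^{*}_{n,\mathscr{X}}(z)\le\mu^{*}_{n,\mathscr{Y}}(z)\le 1$, so $z$ is also admissible for $\|x\|_{n,\mathscr{X}}^{*}$ and contributes the same value $\|\sum_{i=1}^{n}|\langle z_i,x_i\rangle|^{2}\|^{1/2}$. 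Taking the supremum over such $z$ gives $\|x\|_{n,\mathscr{Y}}^{*}\le\|x\|_{n,\mathscr{X}}^{*}$, and together with the first part this yields $\|x\|_{n,\mathscr{Y}}^{*}=\|x\|_{n,\mathscr{X}}^{*}$.

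The main obstacle is the first inequality in the general (non-complemented) case: there is no projection of $\mathscr{X}$ onto $\mathscr{Y}$, an optimal test tuple $y\in\mathscr{X}^n$ cannot simply be restricted to $\mathscr{Y}$, and the map $z\mapsto\langle y_i,z\rangle$ on $\mathscr{Y}$ need not be represented by an element of $\mathscr{Y}$ because Hilbert $C^*$-modules fail to be self-dual. The approximate identity $(\tilde e_\lambda)$ is exactly the surrogate for the missing projection: the operators $\tilde e_\lambda$ push the $y_i$ into $\mathscr{Y}$, keep the test quantity $\mu^{*}_{n}$ at most $1$, and restore the inner products $\langle y_i,x_i\rangle$ in the limit. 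The two points I would check with care are that $e\mapsto\tilde e$ is a well-defined contractive $*$-homomorphism into $\mathcal{L}(\mathscr{X})$ with image in $\mathscr{Y}$, and that $(e_\lambda)$ is an approximate identity for the action of $\mathcal{K}(\mathscr{Y})$ on $\mathscr{Y}$; both are routine facts from the theory of compact operators on Hilbert $C^*$-modules.
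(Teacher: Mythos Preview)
Your proof is correct. For the complemented case your argument is essentially the paper's: both use the orthogonal projection $P$ to show $\mu^{*}_{n,\mathscr{X}}(z)\le\mu^{*}_{n,\mathscr{Y}}(z)$ for $z\in\mathscr{Y}^n$, so every admissible test tuple for $\|x\|^{*}_{n,\mathscr{Y}}$ is admissible for $\|x\|^{*}_{n,\mathscr{X}}$, giving the reverse inequality.

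For the first inequality you take a genuinely different and more careful route. The paper simply records that $\mu^{*}_{n,\mathscr{Y}}(y)\le\mu^{*}_{n,\mathscr{X}}(y)$ for $y\in\mathscr{Y}^n$ and asserts that $\|x\|^{*}_{n,\mathscr{Y}}\ge\|x\|^{*}_{n,\mathscr{X}}$ follows. But the supremum defining $\|x\|^{*}_{n,\mathscr{X}}$ in \eqref{mos3} ranges over test tuples $y$ in all of $\mathscr{X}^n$, not only in $\mathscr{Y}^n$, so this implication is not immediate from the $\mu^{*}$-comparison alone; one must still explain why tuples $y\in\mathscr{X}^n$ contribute no more than tuples in $\mathscr{Y}^n$ when the $x_i$ lie in $\mathscr{Y}$. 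Your approximate-identity device does precisely this: the operators $\tilde e_\lambda\in\mathcal{L}(\mathscr{X})$ coming from a contractive approximate unit of $\mathcal{K}(\mathscr{Y})$ serve as a surrogate for the missing projection, pushing an arbitrary $y\in\mathscr{X}^n$ into $\mathscr{Y}^n$ while keeping $\mu^{*}_{n,\mathscr{Y}}(z^\lambda)\le\mu^{*}_{n,\mathscr{X}}(y)\le 1$ and recovering the inner products $\langle y_i,x_i\rangle$ in the limit because $e_\lambda x_i\to x_i$ on $\mathscr{Y}$. This yields the inequality in full rigour. The paper's line is shorter but leaves the reduction from $\mathscr{X}^n$ to $\mathscr{Y}^n$ unexplained; your argument supplies exactly the mechanism that step requires.
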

\begin{proof}
	 Let $y_1,\dots,y_n \in \mathscr{Y}$. It follows that $\mu_{n,\mathscr{Y}}^{*}(y_1\dots,y_n) \leq \mu_{n,\mathscr{X}}^{*}(y_1\dots,y_n)$.
	So $\left\|x\right\|_{n,\mathscr{Y}}^{*} \geq \left\|x\right\|_{n,\mathscr{X}}^{*}$.
	
	 Suppose that $\mathscr{Y}$ is complemented in $\mathscr{X}$. Set $P\in\mathcal{L}\left(\mathscr{X}\right)$ be the orthogonal projection onto $\mathscr{Y}$. Note that $P(\mathscr{X}_{[1]})=\mathscr{Y}_{[1]}$. We have
	\begin{align*}
		\mu^*_{n,\mathscr{X}}(y_1,\dots,y_n)&=\sup_{x\in\mathscr{X}_{[1]}}\left\|\left( \sum_{i=1}^{n}\left|\left\langle Py_i,x\right\rangle\right|^2\right)^{1/2}\right\|=\sup_{x\in\mathscr{X}_{[1]}}\left\|\left( \sum_{i=1}^{n}\left|\left\langle y_i,Px\right\rangle\right|^2\right)^{1/2}\right\|\\&=\sup_{y\in\mathscr{Y}_{[1]}}\left\|\left( \sum_{i=1}^{n}\left|\left\langle y_i,y\right\rangle\right|^2\right)^{1/2}\right\| \leq	\mu^*_{n,\mathscr{Y}}(y_1,\dots,y_n).
		\end{align*}
	Hence $\left\|x\right\|_{n,\mathscr{Y}}^{*}=\left\|x\right\|_{n,\mathscr{X}}^{*}$.
\end{proof}

Suppose that $T\in\mathcal{L}(\mathscr{X})$. Let $\phi\left(T\right)$ on $\mathscr{X}/\mathcal{N}_{\tau}$ be defined by $\phi(T)\left(x+\mathcal{N}_{\tau}\right):=Tx+\mathcal{N}_{\tau}$ for $x\in\mathscr{X}$. It is well-defined by $\langle Tx,Tx\rangle \leq \|T\|^2 \langle x,x\rangle$; see \cite[Proposition 1.2]{lance}. In addition, the operator $\phi(T)$ has a unique extension $\phi_{\tau}(T)\in\mathcal{L}(\mathscr{H}_{\tau})$ and the deduced map 
\begin{equation}\label{mos5}
\phi_{\tau}:\mathcal{L}(\mathscr{X})\rightarrow\mathcal{L}(\mathscr{H}_{\tau})
\end{equation}
is a $*$-homomorphism.
 
 Let $x_1,\dots,x_n\in\mathscr{X}$. We put
 \begin{equation*}
\left\|\left(x_1,\dots,x_n\right)\right\|_n^{\mathcal{P}\left(\mathfrak{A}\right)}:=\sup_{\tau\in\mathcal{P}\left(\mathfrak{A}\right)}\left\|\left(x_1+\mathcal{N}_{\tau},\dots,x_n+\mathcal{N}_{\tau}\right)\right\|_n^{\mathscr{H}_{\tau}}.
 \end{equation*}
 
It follows from \cite[Theorem 5.1.11]{mor} that
\begin{align*}
\sup_{\tau\in\mathcal{P}\left(\mathfrak{A}\right)}\left\|x+\mathcal{N}_{\tau}\right\|^{\mathscr{H}_{\tau}}=\sup_{\tau\in\mathcal{P}\left(\mathfrak{A}\right)}\tau\left( \langle x,x\rangle\right)^{1/2}=\left\|x\right\|.
\end{align*}
 It is straightforward to verify that $\left( \left(\mathscr{X}^n,\left\|\cdot\right\|_n^{\mathcal{P}\left(\mathfrak{A} \right)} \right):n\in\mathbb{N}\right)$ is a multi-normed space.
\begin{lemma}\label{orth}
If $x_1,x_2,\dots,y_n$ are mutually orthogonal elements in $\mathscr{X}$, then $$\mu_n^*(x_1,x_2,\dots,x_n)=\max_{1\leq i\leq n }\left\|x_i\right\|.$$
\end{lemma}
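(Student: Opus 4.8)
The plan is to read the lower bound off \eqref{powernorm} and to obtain the matching upper bound from the reformulation \eqref{Lemma3.7}. Since \eqref{powernorm} already gives $\max_{1\le i\le n}\|x_i\|\le\mu^*_n(x_1,\dots,x_n)$ for arbitrary elements, the whole content of the lemma is the reverse inequality $\mu^*_n(x_1,\dots,x_n)\le\max_{1\le i\le n}\|x_i\|$ under the hypothesis $\langle x_i,x_j\rangle=0$ for $i\ne j$.

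I would fix $a_1,\dots,a_n\in\mathfrak{A}$ with $\bigl\|\sum_{i=1}^na_i^*a_i\bigr\|\le1$ and estimate $\bigl\|\sum_{i=1}^nx_ia_i\bigr\|$, which by \eqref{Lemma3.7} is all that is needed. Expanding the norm through the inner product and using mutual orthogonality to annihilate the off-diagonal terms gives
\begin{equation*}
\Bigl\|\sum_{i=1}^nx_ia_i\Bigr\|^2=\Bigl\|\sum_{i,j=1}^na_i^*\langle x_i,x_j\rangle a_j\Bigr\|=\Bigl\|\sum_{i=1}^na_i^*\langle x_i,x_i\rangle a_i\Bigr\|.
\end{equation*}
Then, since each $\langle x_i,x_i\rangle$ is positive with $\langle x_i,x_i\rangle\le\|x_i\|^2\,1$ (passing to $\mathfrak{A}\oplus\mathbb{C}$ if $\mathfrak{A}$ is non-unital), one has $a_i^*\langle x_i,x_i\rangle a_i\le\|x_i\|^2a_i^*a_i$ for each $i$, hence
\begin{equation*}
0\le\sum_{i=1}^na_i^*\langle x_i,x_i\rangle a_i\le\Bigl(\max_{1\le i\le n}\|x_i\|^2\Bigr)\sum_{i=1}^na_i^*a_i,
\end{equation*}
and monotonicity of the norm on positive elements bounds the left-hand side of the first display by $\max_{1\le i\le n}\|x_i\|^2$. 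Taking the supremum over all admissible $(a_1,\dots,a_n)$ yields $\mu^*_n(x_1,\dots,x_n)\le\max_{1\le i\le n}\|x_i\|$, and combining this with \eqref{powernorm} finishes the proof.

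I do not expect a genuine obstacle here. The only point needing a little care is the reduction to the unitization so that the scalar comparison $\langle x_i,x_i\rangle\le\|x_i\|^2\,1$ is meaningful when $\mathfrak{A}$ is non-unital, while noting that $a_i^*\langle x_i,x_i\rangle a_i$ and $a_i^*a_i$ themselves remain in $\mathfrak{A}$ and that the order relation is inherited from the unitization. Alternatively, the same computation can be phrased operator-theoretically: for the ``row'' map $R(a_1,\dots,a_n)=\sum_ix_ia_i$ from $\mathit{l}_n^2(\mathfrak{A})$ into $\mathscr{X}$ one has $R^*R=\mathrm{diag}(\langle x_1,x_1\rangle,\dots,\langle x_n,x_n\rangle)$ by orthogonality, so that $\mu^*_n(x)^2=\|R\|^2=\|R^*R\|=\max_{1\le i\le n}\|x_i\|^2$; I would nonetheless keep the elementary version above as the cleaner route.
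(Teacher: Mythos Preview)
Your proof is correct and follows essentially the same route as the paper: the lower bound is read off \eqref{powernorm}, the upper bound comes from the characterization \eqref{Lemma3.7}, and orthogonality collapses $\bigl\|\sum_i x_ia_i\bigr\|^2$ to $\bigl\|\sum_i a_i^*\langle x_i,x_i\rangle a_i\bigr\|$, which is then bounded by $\max_i\|x_i\|^2$. The paper merely states this last bound without spelling out the comparison $\langle x_i,x_i\rangle\le\|x_i\|^2\,1$ in the unitization, so your write-up is in fact slightly more detailed.
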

\begin{proof}
We have
\begin{align*}
\max_{1\leq i\leq n }\left\|x_i\right\|&\leq\mu_n^*(x_1,\dots,x_n)\tag{by \eqref{powernorm}}\\&=\sup\left\lbrace\left\| \sum_{i=1}^{n}x_ia_i\right\| : \left\|\sum_{i=1}^{n}a_i^*a_i \right\|\leq 1\right\rbrace\tag{by \eqref{Lemma3.7}}\\
&=	\sup\left\lbrace\left\| \sum_{i=1}^{n}a_i^*\left\langle x_i,x_i\right\rangle a_i\right\|^{1/2} : \left\|\sum_{i=1}^{n}a_i^*a_i \right\|\leq 1\right\rbrace\leq\max_{1\leq i\leq n }\left\|x_i\right\|.
\end{align*}
\end{proof}
In the following proposition, we introduce some inequalities between the norms $\left\|\cdot\right\|_n^{\mathscr{X}}$, $\left\|\cdot\right\|_n^*$, and $\left\|\cdot\right\|_n^{\mathcal{P}\left(\mathfrak{A} \right)}$ based on a Hilbert $\mathfrak{A}$-module $\mathscr{X}$.
\begin{proposition}\label{module}
Suppose that $\mathscr{X}$ is a Hilbert $\mathfrak{A}$-module. Let $x=\left(x_1,\dots,x_n \right)\in\mathscr{X}^n$. Then
\begin{enumerate}
\item
$\left\|x\right\|_n^{\mathscr{X}}\leq\left\|x\right\|_n^*$,
\item
$\left\|x\right\|_n^{\mathscr{X}}\leq\left\|x\right\|_n^{\mathcal{P}\left(\mathfrak{A} \right)}$,
\item
if $\mathfrak{A}$ is commutative and o-dim${\mathscr{H}_{\tau}}\geq n$, then 
$\left\|x\right\|_n^{*}\leq\left\|x\right\|_n^{\mathcal{P}\left(\mathfrak{A} \right)}$
for each $\tau\in\mathcal{P}\left(\mathfrak{A} \right)$.
\end{enumerate}
\end{proposition}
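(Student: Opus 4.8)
The plan is to prove the three inequalities in turn; parts (1) and (2) are short, while (3) carries the weight.

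For (1), fix a family $(P_i:1\leq i\leq n)$ of mutually orthogonal projections in $\mathcal{L}(\mathscr{X})$ with $P_1+\cdots+P_n=I_{\mathscr{X}}$ and set $z_i=P_ix_i$. Since $\langle z_i,z_j\rangle=\langle x_i,P_iP_jx_j\rangle=0$ for $i\neq j$, we get $\langle z_1+\cdots+z_n,z_1+\cdots+z_n\rangle=\sum_{i=1}^n|z_i|^2$, so it suffices to bound $\big\|\sum_i|z_i|^2\big\|$ by $\|x\|_n^*$. I would feed into the supremum \eqref{mos3} the approximate unit vectors $u_i^{\varepsilon}=z_i(|z_i|^2+\varepsilon)^{-1/2}$ ($\varepsilon>0$), which lie in $\mathscr{X}$ (also when $\mathfrak{A}$ is non-unital, after splitting off the scalar part of $(|z_i|^2+\varepsilon)^{-1/2}$). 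One checks $\|u_i^{\varepsilon}\|\leq 1$ and, crucially, $P_iu_i^{\varepsilon}=u_i^{\varepsilon}$, so the $u_i^{\varepsilon}$ are pairwise orthogonal and Lemma \ref{orth} gives $\mu_n^*(u_1^{\varepsilon},\dots,u_n^{\varepsilon})=\max_{1\leq i\leq n}\|u_i^{\varepsilon}\|\leq 1$. Since $\langle u_i^{\varepsilon},x_i\rangle=(|z_i|^2+\varepsilon)^{-1/2}|z_i|^2$, functional calculus shows $|\langle u_i^{\varepsilon},x_i\rangle|^2\to|z_i|^2$ in norm as $\varepsilon\downarrow 0$; letting $\varepsilon\downarrow 0$ in $\big\|\sum_i|\langle u_i^{\varepsilon},x_i\rangle|^2\big\|^{1/2}\leq\|x\|_n^*$ yields $\|z_1+\cdots+z_n\|\leq\|x\|_n^*$, and a supremum over $(P_i)$ proves (1).

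For (2), fix $(P_i)$ as above. For $\tau\in\mathcal{P}(\mathfrak{A})$ the $*$-homomorphism $\phi_{\tau}$ of \eqref{mos5} satisfies $\phi_{\tau}(I_{\mathscr{X}})=I_{\mathscr{H}_{\tau}}$, so $(\phi_{\tau}(P_i))$ is a family of mutually orthogonal projections in $\mathcal{L}(\mathscr{H}_{\tau})$ summing to $I_{\mathscr{H}_{\tau}}$, while $\phi_{\tau}(P_i)(x_i+\mathcal{N}_{\tau})=P_ix_i+\mathcal{N}_{\tau}$. Hence $\big\|P_1x_1+\cdots+P_nx_n+\mathcal{N}_{\tau}\big\|^{\mathscr{H}_{\tau}}$ appears in the supremum defining $\|(x_1+\mathcal{N}_{\tau},\dots,x_n+\mathcal{N}_{\tau})\|_n^{\mathscr{H}_{\tau}}$ and is therefore $\leq\|x\|_n^{\mathcal{P}(\mathfrak{A})}$ for every $\tau$. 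Taking the supremum over $\tau$ and using $\sup_{\tau}\|w+\mathcal{N}_{\tau}\|^{\mathscr{H}_{\tau}}=\|w\|$ (the consequence of \cite[Theorem 5.1.11]{mor} recorded above) gives $\|P_1x_1+\cdots+P_nx_n\|\leq\|x\|_n^{\mathcal{P}(\mathfrak{A})}$; a final supremum over $(P_i)$ finishes (2).

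For (3) assume $\mathfrak{A}$ commutative, so each $\tau\in\mathcal{P}(\mathfrak{A})$ is a character and $\tau(|c|^2)=|\tau(c)|^2$ for $c\in\mathfrak{A}$. Let $y_1,\dots,y_n\in\mathscr{X}$ with $\mu_n^*(y_1,\dots,y_n)\leq 1$. By \eqref{Proposition 3.6}, $\big(\sum_i|\langle y_i,v\rangle|^2\big)^{1/2}\leq|v|$ for all $v\in\mathscr{X}$, and commutativity lets us square this to the $\mathfrak{A}$-valued domination $\sum_{i=1}^n|\langle y_i,v\rangle|^2\leq\langle v,v\rangle$ for all $v$. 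Fix $\tau$ and put $\eta_i=y_i+\mathcal{N}_{\tau}$, $\xi_i=x_i+\mathcal{N}_{\tau}$. Using density of $\mathscr{X}/\mathcal{N}_{\tau}$ in $\mathscr{H}_{\tau}$ and multiplicativity of $\tau$, one obtains $\mu_n^*(\eta_1,\dots,\eta_n)^2=\sup\big\{\tau\big(\sum_i|\langle y_i,v\rangle|^2\big):v\in\mathscr{X},\ \tau(\langle v,v\rangle)\leq 1\big\}\leq 1$ by the domination. Since o-dim$(\mathscr{H}_{\tau})\geq n$, Equality \eqref{dim} identifies $\|\xi\|_n^{\mathscr{H}_{\tau}}$ with $\|\xi\|_n^*$ computed in $\mathscr{H}_{\tau}$; plugging $(\eta_1,\dots,\eta_n)$ into the definition \eqref{mos3} there gives $\sum_i|\langle\eta_i,\xi_i\rangle|^2\leq(\|\xi\|_n^{\mathscr{H}_{\tau}})^2\leq(\|x\|_n^{\mathcal{P}(\mathfrak{A})})^2$. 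As $\tau$ is a character, $\sum_i|\langle\eta_i,\xi_i\rangle|^2=\sum_i|\tau(\langle y_i,x_i\rangle)|^2=\tau\big(\sum_i|\langle y_i,x_i\rangle|^2\big)$, so $\tau\big(\sum_i|\langle y_i,x_i\rangle|^2\big)\leq(\|x\|_n^{\mathcal{P}(\mathfrak{A})})^2$ for all $\tau$; since $\big\|\sum_i|\langle y_i,x_i\rangle|^2\big\|=\sup_{\tau\in\mathcal{P}(\mathfrak{A})}\tau\big(\sum_i|\langle y_i,x_i\rangle|^2\big)$, a supremum over $\tau$ and then over the admissible $(y_i)$ yields $\|x\|_n^*\leq\|x\|_n^{\mathcal{P}(\mathfrak{A})}$. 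The step I expect to be the main obstacle is precisely this transfer of the normalization $\mu_n^*\leq 1$ from $\mathscr{X}$ to each fiber $\mathscr{H}_{\tau}$: it is here that both hypotheses are indispensable — commutativity, so that pure states are multiplicative (hence $\tau(|c|^2)=|\tau(c)|^2$, which both identifies the fiberwise squares and licenses squaring the inequality from \eqref{Proposition 3.6}), and o-dim$(\mathscr{H}_{\tau})\geq n$, so that \eqref{dim} applies in each fiber; without the latter, $\|\xi\|_n^{\mathscr{H}_{\tau}}$ may fall strictly below $\|\xi\|_n^*$ and the estimate collapses.
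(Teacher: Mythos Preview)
Your proof is correct and follows essentially the same approach as the paper's. The only cosmetic differences are that in (1) you pass to the limit $\varepsilon\downarrow 0$ where the paper uses an explicit $\varepsilon/n$ estimate, and in (2) and (3) you bound uniformly over all $\tau\in\mathcal{P}(\mathfrak{A})$ and then take the supremum, whereas the paper selects a single $\tau$ attaining the relevant norm via \cite[Theorem 5.1.11]{mor}; the key ingredients (Lemma~\ref{orth}, the $*$-homomorphism $\phi_\tau$, the characterization \eqref{Proposition 3.6}, multiplicativity of pure states in the commutative case, and \eqref{dim}) are used identically.
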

\begin{proof}
(1) Without loss of generality, we may assume that $\mathfrak{A}$ is unital. Let $x_1,\dots,x_n\in\mathscr{X}$ be given. Let $(P_i:1\leq i\leq n)$ be a family of mutually orthogonal projections summing to $I_{\mathscr{X}}$. Pick $\varepsilon>0$ and set $y_i=P_ix_i\left(\left|P_ix_i\right|^2+\varepsilon/n\right)^{-1/2}$ for $1\leq i\leq n$. Then
\begin{align*}
\left\langle y_i,y_i\right\rangle=	\left(\left|P_ix_i\right|^2+\varepsilon/n\right)^{-1/2}\left|P_ix_i\right|^2\left(\left|P_ix_i\right|^2+\varepsilon/n\right)^{-1/2}\leq1.
\end{align*}
Since $\left\langle y_i,y_j\right\rangle=0$ for $1\leq i\neq j\leq n$, Lemma \ref{orth} yields that $\mu_n^*(y_1,y_2,\dots,y_n)\leq 1$. Moreover, 
\begin{align*}
\left|P_ix_i\right|^2 &\leq\left|P_ix_i\right|^2\left(\left|P_ix_i\right|^2+\varepsilon/n\right)^{-1}\left|P_ix_i\right|^2+\varepsilon/n\\
&\leq\left| \left\langle P_ix_i\left(\left|P_ix_i\right|^2+\varepsilon/n\right)^{-1/2},Px_i\right\rangle\right| ^2+\varepsilon/n\\
&\leq\left| \left\langle y_i,x_i\right\rangle\right| ^2+\varepsilon/n.
\end{align*}
Employing the fact that if $a, b\in\mathfrak{A}$ and $0\leq a\leq b$, then $\|a\|\leq \|b\|$, we get
	\begin{equation*}
		\left\| \left|P_1x_1 \right|^2+\dots+\left|P_nx_n \right|^2\right\|\leq\left\|\sum_{i=1}^{n}\left|\left\langle y_i,x_i\right\rangle\right| ^{2}\right\|+\varepsilon. 
	\end{equation*}
	Therefore, we arrive at $\left\|x \right\|_n^{\mathscr{X}} \leq \left\|x\right\|_n^*$.
	
	(2) Pick $(P_i:1\leq i\leq n)$ be a family of mutually orthogonal projections summing to $I_{\mathscr{X}}$. It derives from \cite[Theorem 5.1.11]{mor} that there exists $\tau\in\mathcal{P}\left(\mathfrak{A}\right)$ such that $\left\|\sum_{i=1}^{n}\left| P_ix_i\right|^2\right\|=\tau\left( \sum_{i=1}^{n}\left| P_ix_i\right|^2\right)$. Since the map $\phi_{\tau}:\mathcal{L}(\mathscr{X})\rightarrow\mathcal{L}(\mathscr{H}_{\tau})$ is a $*$-homomorphism, we observe that $(\phi_{\tau}(P_i):1\leq i\leq n)$ is a family of mutually orthogonal projections in $\mathcal{L}(\mathscr{H}_{\tau})$. Thus, we arrive at 
	\begin{align*}
\left\|\sum_{i=1}^{n} P_ix_i\right\|^2&=\left\|\sum_{i=1}^{n}\left| P_ix_i\right|^2\right\|= \sum_{i=1}^{n}\tau\left( \left| P_ix_i\right|^2\right)=\sum_{i=1}^{n} \left\langle P_ix_i+\mathcal{N}_{\tau}, P_ix_i+\mathcal{N}_{\tau}\right\rangle \\
&=\sum_{i=1}^{n} \left| P_ix_i+\mathcal{N}_{\tau}\right|^2 = \sum_{i=1}^{n}\left|\phi_{\tau}(P_i)\left( x_i+\mathcal{N}_{\tau}\right) \right|^2 \\
&=\left| \sum_{i=1}^{n} \phi_{\tau}(P_i)( x_i+\mathcal{N}_{\tau}) \right|^2 \quad ({\rm by~the~orthogonality~of~\phi_{\tau}(P_i)'s})\\
&\leq\left(\left\|\left(x_1+\mathcal{N}_{\tau},\dots,x_n+\mathcal{N}_{\tau}\right)\right\|_n^{\mathscr{H}_{\tau}}\right)^2\quad ({\rm by~\eqref{Hilbert multi norm}}).
	\end{align*}
	Hence $\left\|x\right\|_n^{\mathscr{X}}\leq\left\|x\right\|_n^{\mathcal{P}\left(\mathfrak{A} \right)}$.
	
	(3) Fix $n\in\mathbb{N}$ and $x=\left( x_1,\dots,x_n\right) \in\mathscr{X}^n$. Let $\left( y_1,\dots,y_n\right) \in\mathscr{X}^n$ with $\mu^*_n(y_1,\dots,y_n)\leq1$. There exists $\tau\in\mathcal{P}\left(\mathfrak{A}\right)$ such that $\tau\left(\left( \sum_{i=1}^{n}\left|\left\langle y_i,x_i \right\rangle\right|^2 \right)^{{1}/{2}}\right)=\left\|\left( \sum_{i=1}^{n}\left|\left\langle y_i,x_i \right\rangle\right|^2 \right)^{{1}/{2}}\right\|$. It follows from \eqref{Proposition 3.6} that
	\begin{equation}\label{min}
			\mu^*_{n}(y_1,\ldots,y_n)=\min\left\lbrace\lambda>0:\left( \sum_{i=1}^{n}\left| \left\langle y_i,x\right\rangle\right|^2\right)^{1/2}\leq\lambda\left|x\right|\,\ \text{for all $x\in\mathscr{X}$}\right\rbrace.
		\end{equation}
	Therefore, $\sum_{i=1}^{n}\left| \left\langle y_i,x\right\rangle\right|^2\leq\left\langle x,x\right\rangle$ for all $x\in\mathscr{E}$. \cite[Theorem 3.3.2]{mor} shows that $\tau$ is a $*$-homomorphism. Hence,
	\begin{equation}\label{tau}
\sum_{i=1}^{n}\left|\left\langle y_i+\mathcal{N}_{\tau},x+\mathcal{N}_{\tau}\right\rangle\right|^2=\tau\left(\sum_{i=1}^{n}\left| \left\langle y_i,x\right\rangle\right|^2\right)\leq\tau\left( \left\langle x,x\right\rangle\right) =\left|x+\mathcal{N}_{\tau}\right|^2
	\end{equation}
for all $x\in\mathscr{X}$. Since $\mathscr{H}_{\tau}$ is the Hilbert completion of $\mathscr{E}/\mathcal{N}_{\tau}$, we derive from \eqref{min} and \eqref{tau} that $\mu^{\mathscr{H}_{\tau}}_n\left( y_1+\mathcal{N}_{\tau},\ldots,y_n+\mathcal{N}_{\tau}\right)\leq 1$, where $\mu^{\mathscr{H}_{\tau}}_n$ is calculated with respect to the Hilbert space $\mathscr{H}_{\tau}$. Thus,
	\begin{align*}
		\left\|\left( \sum_{i=1}^{n}\left|\left\langle y_i,x_i \right\rangle\right|^2 \right)^{{1}/{2}}\right\|&=\tau\left(\left(\sum_{i=1}^{n}\left|\left\langle y_i,x_i\right\rangle\right|^2 \right)^{{1}/{2}}\right)\\&=\left(\sum_{i=1}^{n}\left|\tau\left\langle y_i,x_i\right\rangle\right|^2 \right)^{{1}/{2}}\tag{$\tau$ is a $*$-homomorphism}\\&=\left(\sum_{i=1}^{n}\left|\left\langle y_i+\mathcal{N}_{\tau},x_i+\mathcal{N}_{\tau}\right\rangle\right|^2 \right)^{{1}/{2}}\\& \leq\left\|\left(x_1+\mathcal{N}_{\tau},\dots,x_n+\mathcal{N}_{\tau} \right) \right\|_n^*\qquad\qquad\qquad\qquad\qquad({\rm by~ \eqref{mos3}})\\& =\left\|\left(x_1+\mathcal{N}_{\tau},\dots,x_n+\mathcal{N}_{\tau} \right) \right\|_n^{\mathscr{H}_{\tau}}\tag{by the fact that o-dim${\mathscr{H}_{\tau}}\geq n$ and \eqref{dim} }.
	\end{align*}
	Hence $\left\|x\right\|_n^*\leq\left\|x\right\|_n^{\mathcal{P}\left(\mathfrak{A} \right)}$.
\end{proof}
In the following examples, we calculate the multi-norms in the case when $\mathscr{X}=\mathit{l}_n^2(\mathscr{\mathfrak{A}})$ for a commutative $C^*$-algebra $\mathfrak{A}$. 

\begin{example}\label{example}
Let $\mathscr{X}=\mathit{l}_n^2(\mathscr{\mathfrak{A}})$, where $\mathfrak{A}$ is a unital commutative $C^*$-algebra. Then $\left\|x\right\|_m^{\mathscr{X}}=\left\|x\right\|_m^{*}=\left\|x\right\|_m^{\mathcal{P}\left(\mathfrak{A} \right)}$ , where $m\leq n$ and $x=\left(x_1,\dots,x_m\right)\in\mathscr{X}^m$. To prove this, let $\tau\in\mathcal{P}(\mathfrak{A})$. It follows from \cite[Theorem 2.1.10]{mor} that $\mathfrak{A}=C(\Omega)$, where $\Omega$ is a compact Hausdorff space. It follows from \cite[Theorems 2.1.15 and 5.1.6]{mor} that there exists $\omega\in\Omega$ such that $\tau(f)=f(\omega)$ for $f\in C(\Omega)$. Thus
\begin{align*}
	\mathcal{N}_{\tau}&=\left\lbrace x\in \mathit{l}_n^2(C(\Omega)):\tau(\left\langle x,x\right\rangle)=0 \right\rbrace\\
	&=\left\lbrace (f_i)\in \mathit{l}_n^2(C(\Omega)):\tau\left(\sum_{i=1}^{n}\left|f_i\right|^2\right)=0 \right\rbrace\\& =\left\lbrace (f_i)\in \mathit{l}_n^2(C(\Omega)):f_i(\omega)=0 \mbox{~for~all~} 1\leq i \leq n \right\rbrace.
\end{align*}
We derive that the map $\mathscr{X}/\mathcal{N}_{\tau}\rightarrow \mathit{l}_n^2$ defined by $(f_i)+\mathcal{N}_{\tau}\mapsto(f_i(w))$ is an isometry and surjective. So $\mathscr{H}_{\tau}=\mathit{l}_n^2$. Let $(p_i:1\leq i\leq m)$ be a family of mutually orthogonal projections in $\mathcal{L}(\mathscr{H}_{\tau})=\mathcal{L}(\mathit{l}_n^2)=\mathbb{M}_n(\mathbb{C})$ summing to $I_{\mathit{l}_n^2}$. It follows from \cite[p. 16]{Manu} that $\mathcal{L}\left( \mathscr{X} \right)=\mathbb{M}_n(\mathfrak{A})$. In addition, we identify $\mathbb{M}_n(\mathbb{C})$ with $\mathbb{M}_n(\mathbb{C}) I_{\mathbb{M}_n(\mathfrak{A})}$. Let $x_i=(f_{ij}:1\leq j \leq n)$ for $1\leq i \leq m$, where $f_{ij}\in C(\Omega)$ for all $i$ and $j$. Then
\begin{align*}
	\sup_{\tau\in\mathcal{P}\left(\mathfrak{A}\right)}\left\|\sum_{i=1}^{m}p_i(x_i+\mathcal{N}_{\tau})\right\|&=\sup_{\omega\in\Omega}\left\|\sum_{i=1}^{m}p_i(f_{ij}(\omega))_j\right\| =\sup_{\omega\in\Omega}\left\|\left( \sum_{i=1}^{m} p_i\left( f_{ij}\right)_j \right) (\omega) \right\|\\&= \left\| \sum_{i=1}^{m} p_i\left( f_{ij}\right)_j \right\|= \left\| \sum_{i=1}^{m} p_ix_i \right\|.
\end{align*}
Hence $\left\|x\right\|_n^{\mathcal{P}\left(\mathfrak{A} \right)}\leq\left\|x\right\|_n^{\mathscr{X}}$. Proposition \ref{module} entails that $\left\|x\right\|_n^{\mathscr{X}}=\left\|x\right\|_n^{*}=\left\|x\right\|_n^{\mathcal{P}\left(\mathfrak{A} \right)}$.
\end{example}
\begin{example}
Assume $\mathscr{X}=\mathfrak{A}$ as a Hilbert $\mathfrak{A}$-module, where $\mathfrak{A}$ is commutative. We show that $\left\|x\right\|_n^{\mathscr{X}}=\left\|x\right\|_n^{*}=\left\|x\right\|_n^{\mathcal{P}\left(\mathfrak{A} \right)}=\max_{1\leq i\leq n }\left\|a_i\right\|$, where $a=\left(a_1,\dots,a_n\right)\in\mathfrak{A}^n$. To this end, suppose that $\mathfrak{A}$ is unital. We derive from Example \ref{example} that $\mathscr{H}_{\tau}=\mathbb{C}$.
 The minimum multi-norm is the unique multi-norm based on $\mathbb{C}$ \cite[Proposition 3.6]{2012}. Thus,
\begin{align*}
\max_{1\leq i\leq n }\left\|a_i\right\|\leq\left\|\left(a_1,\dots,a_n\right)\right\|_n^{\mathcal{P}\left(\mathfrak{A} \right)}&=\sup_{\tau\in\mathcal{P}\left(\mathfrak{A}\right)}\left\|\left(a_1+\mathcal{N}_{\tau},\dots,a_n+\mathcal{N}_{\tau}\right)\right\|_n^{\mathscr{H}_{\tau}}\\&=\sup_{\tau\in\mathcal{P}\left(\mathfrak{A}\right)}\max_{1\leq i\leq n }\left\|a_i+\mathcal{N}_{\tau}\right\|\leq\max_{1\leq i\leq n }\left\|a_i\right\|.
\end{align*}
In the case where $\mathfrak{A}$ has no unit, let $\mathfrak{B}$ be the minimal unitization $\mathfrak{A}\oplus \mathbb{C}$ of $\mathfrak{A}$. Pick $\tau\in\mathcal{P}\left(\mathfrak{A}\right)$. It follows from \cite[Theorem 5.1.13]{mor} that there is a unique pure state $\tilde{\tau}$ on $\mathfrak{B}$ extending $\tau$. Obviously, the map $\mathfrak{A}/\mathcal{N}_{\tau}\rightarrow\mathfrak{B}/\mathcal{N}_{\tilde{\tau}}$ defined by $x+\mathcal{N}_{\tau}\mapsto x+\mathcal{N}_{\tilde{\tau}}$ is an isometry So $\mathscr{H}_{\tau}$ is a nonzero subspace of ${H}_{\tilde{\tau}}$. Thus, $\mathscr{H}_{\tau}=\mathscr{H}_{\tilde{\tau}}=\mathbb{C}$. Therefore, $\left\|a\right\|_n^{\mathcal{P}\left(\mathfrak{A} \right)}=\max_{1\leq i\leq n }\left\|a_i\right\|$. 
	
Furthermore, it immediately follows from Proposition \ref{module} that $\left\|a\right\|_n^{\mathscr{X}}=\left\|a\right\|_n^{*}=\left\|a\right\|_n^{\mathcal{P}\left(\mathfrak{A} \right)}=\max_{1\leq i\leq n }\left\|a_i\right\|$.
\end{example}
 
 \section{Multi-norms based on a Hilbert $\mathbb{K}\left(\mathscr{H}\right)$-modules}
 
 Following \cite[Definition 1]{compact}, an element $u \in \mathscr{X}$ is said to be a basic vector if
 $e=\left\langle u, u \right\rangle$ is a \emph{minimal projection} in $\mathfrak{A}$ in the sense that $e\mathfrak{A} e= \mathbb{C}e$. Every minimal projection in $\mathcal{L}(\mathscr{H})$ has the form
 $\theta_{\xi,\xi}$ for some $\xi \in \mathscr{H}_{[1]}$. A system 	$(u_{\lambda})_{\lambda \in \Lambda}$
 in	$E$ is \emph{orthonormal} if each $u_{\lambda}$ is a basic vector and $\left\langle u_{\lambda}, u_{\mu} \right\rangle=0$ for all $\lambda \neq \mu$.	An \emph{orthonormal system} 	$(u_{\lambda})$ in $\mathscr{X}$ is called an \emph{orthonormal basis} if it generates a dense submodule of	$\mathscr{X}$. Every Hilbert 	$C^*$-module over a	$C^*$-algebra $\mathfrak{A}$ of compact operators admits an orthonormal basis. It follows that every closed submodule of $\mathscr{X}$ is complemented; see \cite[Theorem 4]{compact}\label{24} and also \cite{ARA, ASA}. All orthonormal bases have the same cardinality; see \cite[Proposition 1.11]{cabrera}. The orthogonal dimension of $\mathscr{X}$, in short \emph{o-dim$(\mathscr{X})$}, is defined as the cardinal number of any one of its
 orthonormal bases.
 
To achieve the next result, we need the following lemma.
 
 \begin{lemma}\cite[Theorem 1]{compact}\label{compact}
 	Let $(u_{\lambda})_{\lambda \in \Lambda}$
 	be an orthonormal system in 
 	$\mathscr{X}$.
 	The following statements are mutually equivalent:
 	\begin{enumerate}
 		\item
 		$(u_{\lambda})$
 		is an orthonormal basis for 
 		$\mathscr{X}$.
 		\item
 		$x=\sum_{\lambda \in \Lambda}u_{\lambda}\left\langle u_{\lambda},x\right\rangle$ for every
 		$x \in \mathscr{X}$.
 	\end{enumerate}
 \end{lemma}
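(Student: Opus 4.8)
The plan is to prove the two implications separately; $(2)\Rightarrow(1)$ is essentially immediate, while $(1)\Rightarrow(2)$ carries the real content. For $(2)\Rightarrow(1)$, I would note that for each finite $F\subseteq\Lambda$ the partial sum $s_F:=\sum_{\lambda\in F}u_\lambda\langle u_\lambda,x\rangle$ lies in the submodule generated by $(u_\lambda)_{\lambda\in\Lambda}$; so if the net $(s_F)_F$, indexed by the finite subsets of $\Lambda$ ordered by inclusion, converges to $x$ for every $x\in\mathscr{X}$, then that submodule is dense, i.e. $(u_\lambda)$ is an orthonormal basis.

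For the converse I would first record the elementary facts about a basic vector $u$ with $e=\langle u,u\rangle$ a minimal projection: from $\langle u-ue,u-ue\rangle=0$ one obtains $ue=u$, hence $\langle u,x\rangle=e\langle u,x\rangle$ for every $x\in\mathscr{X}$. Combining this with the orthogonality $\langle u_\lambda,u_\mu\rangle=0$ for $\lambda\neq\mu$, a direct expansion yields the Pythagorean identity $\langle s_F,s_F\rangle=\sum_{\lambda\in F}|\langle u_\lambda,x\rangle|^2$, and then $\langle x-s_F,x-s_F\rangle\geq 0$ gives Bessel's inequality $\sum_{\lambda\in F}|\langle u_\lambda,x\rangle|^2\leq\langle x,x\rangle$ for every finite $F$. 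In particular $\|s_G-s_F\|^2=\bigl\|\sum_{\lambda\in G\setminus F}|\langle u_\lambda,x\rangle|^2\bigr\|$ whenever $F\subseteq G$.

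The key step is to show the net $(s_F)_F$ is norm-Cauchy, hence convergent by completeness of $\mathscr{X}$ to some $y\in\mathscr{X}$. This is where the hypothesis that $\mathfrak{A}$ is a $C^*$-algebra of compact operators is used decisively: $\langle x,x\rangle$ is then a positive compact operator, so given $\varepsilon>0$ there is a finite-rank spectral projection $q$ of $\langle x,x\rangle$ with $\|(1-q)\langle x,x\rangle(1-q)\|\leq\varepsilon$. Since the increasing positive net $p_F:=\sum_{\lambda\in F}|\langle u_\lambda,x\rangle|^2$ is dominated by $\langle x,x\rangle$, the part $(1-q)(p_G-p_F)(1-q)$ has norm at most $\varepsilon$, the part $q(p_G-p_F)q$ becomes small for $F,G$ large because bounded increasing nets in the finite-dimensional corner $q\mathfrak{A}q$ converge, and the remaining mixed terms $q(p_G-p_F)(1-q)$ and its adjoint are controlled via the Cauchy--Schwarz estimate $\|qbq'\|^2\leq\|qbq\|\,\|q'bq'\|$ valid for $b:=p_G-p_F\geq0$ and $q':=1-q$; together these make $\|p_G-p_F\|$ small. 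Having produced $y:=\lim_F s_F$, continuity of the inner product gives $\langle u_\mu,y\rangle=\langle u_\mu,x\rangle$ for every $\mu$, so $x-y$ is orthogonal to each $u_\lambda$, hence to the submodule they generate, hence — invoking $(1)$, namely density of that submodule, together with continuity of the inner product — to all of $\mathscr{X}$; in particular $\langle x-y,x-y\rangle=0$, so $x=y$ and $(2)$ holds.

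I expect the norm-convergence of $(s_F)_F$ to be the only genuine obstacle: over a general $C^*$-algebra the bounded increasing net $(p_F)_F$ need not converge in norm, and it is precisely compactness of the operators in $\mathfrak{A}$, exploited through a finite-rank cut-off of $\langle x,x\rangle$, that forces the required Cauchy property. The basic-vector identities, the Pythagorean and Bessel relations, and the closing orthogonality argument are all routine.
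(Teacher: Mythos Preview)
The paper does not prove this lemma at all; it is quoted from Baki\'{c} and Gulja\v{s} \cite[Theorem 1]{compact} and used as a black box, so there is no in-paper argument to compare your proposal against.

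That said, your outline is sound and would yield a self-contained proof. The direction $(2)\Rightarrow(1)$ and the Bessel/Pythagorean calculations are routine, and the closing orthogonality argument (showing $x-y\perp\mathscr{X}$ via density) is standard. The substantive step---norm-Cauchyness of $(s_F)_F$---is handled correctly: for $b:=p_G-p_F\ge 0$ the factorisation $qbq'=(qb^{1/2})(b^{1/2}q')$ gives your bound $\|qbq'\|^2\le\|qbq\|\,\|q'bq'\|$, the finite-rank spectral projection $q$ of $\langle x,x\rangle$ belongs to $C^*(\langle x,x\rangle)\subseteq\mathfrak{A}$ (the characteristic function of $\{\lambda\in\sigma(\langle x,x\rangle):\lambda>\varepsilon\}$ is continuous and vanishes at $0$), and bounded increasing nets in the finite-dimensional corner $q\mathfrak{A}q$ do converge in norm. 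One cosmetic point: since $\mathfrak{A}=\mathbb{K}(\mathscr{H})$ is non-unital, your ``$1-q$'' should be read in the multiplier algebra $\mathcal{L}(\mathscr{H})$ (or the unitization); this is harmless because the four pieces $q(\,\cdot\,)q$, $q(\,\cdot\,)(1-q)$, etc., applied to $p_G-p_F\in\mathfrak{A}$ all land back in $\mathfrak{A}$.
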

 \begin{lemma}\label{base}
 	Let $\mathscr{X}$ be a Hilbert $\mathbb{K}\left(\mathscr{H}\right)$-module with an orthonormal basis $\left(u_{\lambda}\right)_{\lambda \in \Lambda} $, and let $\left( \eta_{\lambda}\right)_{\lambda\in\Lambda}$ be an arbitrary net of unit vectors in $\mathscr{H}$. Then there exists an orthonormal basis $\left(v_{\lambda}\right)_{\lambda \in \Lambda} $ for $\mathscr{X}$	such that 	$\left\langle v_{\lambda},v_{\lambda}\right\rangle = \theta_{\eta_{\lambda},\eta_{\lambda}}$ for each	$ \lambda \in \Lambda$.
 \end{lemma}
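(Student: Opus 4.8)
The plan is to obtain each $v_{\lambda}$ as a right translate $v_{\lambda}=u_{\lambda}w_{\lambda}$ of $u_{\lambda}$ by an element $w_{\lambda}\in\mathbb{K}(\mathscr{H})=\mathfrak{A}$ that implements a Murray--von Neumann equivalence between the two minimal projections $\langle u_{\lambda},u_{\lambda}\rangle$ and $\theta_{\eta_{\lambda},\eta_{\lambda}}$. First, since each $u_{\lambda}$ is a basic vector, $e_{\lambda}:=\langle u_{\lambda},u_{\lambda}\rangle$ is a minimal projection in $\mathbb{K}(\mathscr{H})$, and hence $e_{\lambda}=\theta_{\xi_{\lambda},\xi_{\lambda}}$ for some unit vector $\xi_{\lambda}\in\mathscr{H}$, as recalled before Lemma \ref{compact}. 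I would then set $w_{\lambda}:=\theta_{\xi_{\lambda},\eta_{\lambda}}\in\mathbb{K}(\mathscr{H})$; a direct computation with the composition of rank-one operators, using that $\xi_{\lambda}$ and $\eta_{\lambda}$ are unit vectors, gives $w_{\lambda}^{*}w_{\lambda}=\theta_{\eta_{\lambda},\eta_{\lambda}}$ and $w_{\lambda}w_{\lambda}^{*}=\theta_{\xi_{\lambda},\xi_{\lambda}}=e_{\lambda}$.

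Next, with $v_{\lambda}:=u_{\lambda}w_{\lambda}$, one has $\langle v_{\lambda},v_{\lambda}\rangle=w_{\lambda}^{*}\langle u_{\lambda},u_{\lambda}\rangle w_{\lambda}=w_{\lambda}^{*}(w_{\lambda}w_{\lambda}^{*})w_{\lambda}=(w_{\lambda}^{*}w_{\lambda})^{2}=\theta_{\eta_{\lambda},\eta_{\lambda}}$, which is a minimal projection; so each $v_{\lambda}$ is a basic vector with precisely the prescribed inner product. Moreover, for $\lambda\neq\mu$ we get $\langle v_{\lambda},v_{\mu}\rangle=w_{\lambda}^{*}\langle u_{\lambda},u_{\mu}\rangle w_{\mu}=0$, so $\left(v_{\lambda}\right)_{\lambda\in\Lambda}$ is an orthonormal system.

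It remains to check that $\left(v_{\lambda}\right)$ generates a dense submodule of $\mathscr{X}$, and for this I would verify condition $(2)$ of Lemma \ref{compact}. A preliminary observation: since $\langle u_{\lambda},u_{\lambda}\rangle=e_{\lambda}$ is a projection, expanding $\langle u_{\lambda}-u_{\lambda}e_{\lambda},\,u_{\lambda}-u_{\lambda}e_{\lambda}\rangle=e_{\lambda}-2e_{\lambda}^{2}+e_{\lambda}^{3}=0$ shows $u_{\lambda}e_{\lambda}=u_{\lambda}$ (this is the place where one must avoid invoking a unit of $\mathbb{K}(\mathscr{H})$, which need not exist). Consequently, for every $x\in\mathscr{X}$ we obtain $v_{\lambda}\langle v_{\lambda},x\rangle=u_{\lambda}w_{\lambda}w_{\lambda}^{*}\langle u_{\lambda},x\rangle=u_{\lambda}e_{\lambda}\langle u_{\lambda},x\rangle=u_{\lambda}\langle u_{\lambda},x\rangle$. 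Summing over $\lambda\in\Lambda$ and applying Lemma \ref{compact} to the basis $\left(u_{\lambda}\right)$ yields $\sum_{\lambda}v_{\lambda}\langle v_{\lambda},x\rangle=\sum_{\lambda}u_{\lambda}\langle u_{\lambda},x\rangle=x$, so by the equivalence in Lemma \ref{compact} the system $\left(v_{\lambda}\right)$ is an orthonormal basis for $\mathscr{X}$.

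The only genuinely delicate point is this last step, but, as indicated, the identity $v_{\lambda}\langle v_{\lambda},x\rangle=u_{\lambda}\langle u_{\lambda},x\rangle$ reduces the spanning property of $\left(v_{\lambda}\right)$ directly to that of $\left(u_{\lambda}\right)$ via Lemma \ref{compact}, so no new analytic input is needed. Everything else is routine bookkeeping with rank-one operators in $\mathbb{K}(\mathscr{H})$, the one thing to keep an eye on being the non-unitality of $\mathbb{K}(\mathscr{H})$ when $\mathscr{H}$ is infinite-dimensional.
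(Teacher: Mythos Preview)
Your proof is correct and follows essentially the same route as the paper's: both set $v_{\lambda}=u_{\lambda}\theta_{\xi_{\lambda},\eta_{\lambda}}$ (your $w_{\lambda}$), verify that $(v_{\lambda})$ is an orthonormal system with $\langle v_{\lambda},v_{\lambda}\rangle=\theta_{\eta_{\lambda},\eta_{\lambda}}$, and then reduce the reconstruction identity $\sum_{\lambda}v_{\lambda}\langle v_{\lambda},x\rangle=x$ to that of $(u_{\lambda})$ via $w_{\lambda}w_{\lambda}^{*}=e_{\lambda}$ and $u_{\lambda}e_{\lambda}=u_{\lambda}$, invoking Lemma~\ref{compact}. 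Your write-up is in fact slightly more explicit about why $u_{\lambda}e_{\lambda}=u_{\lambda}$ holds without assuming a unit in $\mathbb{K}(\mathscr{H})$.
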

 \begin{proof}
Suppose that $\left\langle u_\lambda, u_\lambda\right\rangle = \theta_{\xi_\lambda,\xi_\lambda}$,	where 	$\xi_\lambda\in\mathscr{H}$ is a unit vector in $\mathscr{H}$ for each	$ \lambda \in \Lambda$. 	 
Set $v_\lambda=u_\lambda \theta_{\xi_\lambda,\eta_\lambda}$.
 	Then $\left(v_{\lambda}\right)_{\lambda \in \Lambda} $ is an orthonormal system for $\mathscr{X}$ with $\left\langle v_{\lambda},v_{\lambda}\right\rangle = \theta_{\eta_{\lambda},\eta_{\lambda}}$ for each	$\lambda \in \Lambda$. We observe that
 	\begin{align*}
 		\sum_{\lambda \in \Lambda}v_{\lambda}\left\langle v_{\lambda},x\right\rangle&=\sum_{\lambda \in \Lambda}u_\lambda\theta_{\xi_\lambda,\eta_{\lambda}}\left\langle u_\lambda\theta_{\xi_\lambda,\eta_{\lambda}},x\right\rangle 
 		 =\sum_{\lambda \in \Lambda}u_\lambda \theta_{\xi_\lambda,\xi_\lambda}\left\langle u_\lambda,x\right\rangle\tag{$\theta_{\xi_\lambda,\eta_{\lambda}}\theta_{\eta_{\lambda},\xi_\lambda}=\theta_{\xi_\lambda,\xi_\lambda}$}\\ 
 		&= \sum_{\lambda \in \Lambda}u_{\lambda}\left\langle u_{\lambda},x\right\rangle\tag{$u_\lambda \theta_{\xi_\lambda,\xi_\lambda}=u_\lambda$}\\&=x\tag{by Lemma \ref{compact}}
 	\end{align*}
 	for each $x\in\mathscr{X}$. It deduces from Lemma \ref{compact} that
 	$\left(v_\lambda\right)_{\lambda\in\Lambda}$ is an orthonormal basis
 	such that $\left\langle v_{\lambda}, v_{\lambda}\right\rangle = \theta_{\eta_{\lambda},\eta_{\lambda}}$.
 \end{proof} 
The \emph{(absolutely) convex hull} of a nonempty subset $\mathbb{S}$ of a normed space $\mathcal{X}$ is defined as follows:
\begin{align*}
{\rm co}(\mathbb{S}) := \left\{ \sum_{i=1}^{n}t_ix_i : t_1, \dots,t_n \in \mathbb{R}^+,\, \sum_{i=1}^nt_i =1,\, x_1, \dots,x_n \in \mathbb{S}\right\},\\
\left( {\rm aco}(\mathbb{S}): = \left\lbrace \sum_{i=1}^{n}\alpha_ix_i : \alpha_1,\dots,\alpha_n\in\mathbb{C}
\sum_{i=1}^n\left| \alpha_i\right| \leq 1,\, x_1, \dots,x_n \in \mathbb{S}\right\rbrace\right).
 \end{align*}
 If $\alpha\mathbb{S} \subseteq \mathbb{S}$ for each $\alpha \in \mathbb{C}_{[1]}$, then $\mathbb{S}$ is called \emph{ balanced}. In the case where $\mathbb{S}$ is balanced, ${\rm aco}(\mathbb{S}) = {\rm co}(\mathbb{S})$. In the following lemma, we apply the Russo--Dye theorem, asserting that if $\mathfrak{A}$ is a $C^*$-algebra, then $\mathfrak{A}_{[1]}=\overline{aco}(\mathcal{U}(\mathfrak{A}))$, where $\mathcal{U}(\mathfrak{A})$ stands for the unitary group of $\mathfrak{A}$ and the closure is taken with respect to the norm topology. In what follows, we modify the notion of
 ${\rm aco}(\mathbb{S})$ for our investigation. Let $\mathscr{X}$ be a Hilbert $\mathfrak{A}$-module and let $\mathbb{S}\subseteq\mathscr{X}$. We set
 \begin{equation*}
 	{\rm aco}_{\mathfrak{A}}(\mathbb{S}):=\left\lbrace\sum_{i=1}^{n}x_ia_i : a_1,\dots,a_n\in\mathfrak{A},\, 
 	\sum_{i=1}^n\left\|a_i\right\|\leq 1,\, x_1, \dots,x_n \in \mathbb{S} \right\rbrace. 
 \end{equation*}
 The closure of ${\rm aco}_{\mathfrak{A}}(\mathbb{S})$ is taken with respect to the norm topology. Employing an approximate identity of $\mathfrak{A}$, it follows immediately that $ {\rm aco}(\mathbb{S})\subseteq\overline{aco}_{\mathfrak{A}}(\mathbb{S})$. The family of all mutually orthogonal $n$-tuples $\left(x_1,\dots,x_n\right)\subseteq\mathit{l}_n^2(\mathscr{X})$, where $\left\langle x_i,x_i\right\rangle$ is a projection for $1\leq i\leq n$, is denoted by $\mathcal{D}^{\mathscr{X}}_n$. 

\begin{lemma}\label{CO}
 Let $\mathscr{X}$ be a Hilbert $\mathfrak{A}$-module. Then
 \begin{enumerate}
 	\item 
 	$\overline{\rm aco}_{\mathfrak{A}}(\mathcal{D}^{\mathscr{X}}_n)\subseteq\left( \mathscr{X}^n,\mu^*_n\right)_{[1]}$;
 	\item
 	in the case when $\mathscr{X}$ is a Hilbert $\mathbb{K}\left(\mathscr{H}\right)$-module, where $\mathscr{H}$ is separable and $\text{o-dim}\left(\mathscr{X}\right)\geq n\times\left( \text{o-dim}\left(\mathscr{H}\right)\right)$, it holds that $\overline{\rm aco}_{\mathfrak{A}}(\mathcal{D}^{\mathscr{X}}_n)=\left( \mathscr{X}^n, \mu^*_n\right)_{[1]}$.
 \end{enumerate}
\end{lemma}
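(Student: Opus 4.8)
Part (1) is the easy inclusion. The plan is to show that every generating element of $\mathrm{aco}_{\mathfrak{A}}(\mathcal{D}^{\mathscr{X}}_n)$ has $\mu^*_n$-norm at most one; since $\mu^*_n$ is a norm on $\mathscr{X}^n$ (indeed it is the $\mu^*$ of \eqref{mos4}, which is continuous), the closed unit ball is closed and the closure can be absorbed. So fix $(w_1,\dots,w_n)=\sum_{k=1}^{m}(x^{(k)}_1,\dots,x^{(k)}_n)a_k$ with each $(x^{(k)}_1,\dots,x^{(k)}_n)\in\mathcal{D}^{\mathscr{X}}_n$ and $\sum_k\|a_k\|\le 1$. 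Using the subadditivity of $\mu^*_n$ (which follows since it is a norm) together with \eqref{Proposition 3.4(1)}, it suffices to check $\mu^*_n(x^{(k)}_1,\dots,x^{(k)}_n)\le 1$ for a single mutually orthogonal tuple with $\langle x_i,x_i\rangle$ a projection. But that is exactly Lemma \ref{orth}: for mutually orthogonal elements $\mu^*_n(x_1,\dots,x_n)=\max_i\|x_i\|$, and $\|x_i\|^2=\|\langle x_i,x_i\rangle\|\le 1$ since $\langle x_i,x_i\rangle$ is a projection. Summing, $\mu^*_n(w_1,\dots,w_n)\le\sum_k\|a_k\|\le 1$, which gives the claimed inclusion.

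Part (2) is the substantive direction: every $(y_1,\dots,y_n)$ with $\mu^*_n(y_1,\dots,y_n)\le 1$ lies in $\overline{\mathrm{aco}}_{\mathfrak{A}}(\mathcal{D}^{\mathscr{X}}_n)$. The plan is to use the structure theory of Hilbert $\mathbb{K}(\mathscr{H})$-modules. First, fix an orthonormal basis $(u_\lambda)_{\lambda\in\Lambda}$ of $\mathscr{X}$; by the dimension hypothesis $|\Lambda|\ge n\times\mathrm{o\text{-}dim}(\mathscr{H})$, and since $\mathscr{H}$ is separable we may arrange (via Lemma \ref{base}) an orthonormal basis whose range projections $\langle v_\lambda,v_\lambda\rangle=\theta_{\eta_\lambda,\eta_\lambda}$ are prescribed; in particular we can split $\Lambda$ into $n$ disjoint pieces $\Lambda_1,\dots,\Lambda_n$, each of cardinality $\mathrm{o\text{-}dim}(\mathscr{H})$, and on each piece run through an orthonormal basis $(\eta^{(i)}_\nu)$ of $\mathscr{H}$. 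Using Lemma \ref{compact}, expand $y_i=\sum_{\lambda}u_\lambda\langle u_\lambda,y_i\rangle$. The idea is then to rewrite the finite-rank truncations of the $y_i$'s using the flexibility of the $\theta$-operators so that the $i$-th component ``lives on'' $\Lambda_i$ up to the element-of-$\mathfrak{A}$ coefficients, producing mutually orthogonal tuples from $\mathcal{D}^{\mathscr{X}}_n$; the bound $\mu^*_n(y_1,\dots,y_n)\le 1$ combined with \eqref{Lemma3.7} translates into the coefficient bound $\sum\|a_k\|\le 1$ after applying the Russo--Dye theorem to pass from the unit ball of $\mathfrak{A}$ to absolutely convex combinations of unitaries.

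More concretely, the mechanism I expect to use is \eqref{Lemma3.7}: $\mu^*_n(y_1,\dots,y_n)\le 1$ means every $\sum_i y_i a_i$ with $\|\sum_i a_i^*a_i\|\le 1$ has norm $\le 1$; dually, one wants to realize $(y_1,\dots,y_n)$ itself as a limit of $\mathfrak{A}$-absolutely-convex combinations of orthonormal-type tuples. The key gadget is that for a basic vector $u$ with $\langle u,u\rangle=\theta_{\xi,\xi}$ and any $a\in\mathbb{K}(\mathscr{H})$, the element $ua$ can be re-expressed along a different basic direction, and crucially $\theta$-operators of rank one let one ``rotate'' a single coefficient into a projection-valued inner product at the cost of a scalar of modulus the operator norm; iterating over a finite set of indices and using separability to exhaust $\mathscr{H}$, one approximates $y_i$ in norm. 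Then the Russo--Dye theorem $\mathfrak{A}_{[1]}=\overline{\mathrm{aco}}(\mathcal{U}(\mathfrak{A}))$ is invoked to convert the resulting $\mathfrak{A}$-coefficients into genuine absolutely convex combinations with the right norm bound, placing the approximant in $\mathrm{aco}_{\mathfrak{A}}(\mathcal{D}^{\mathscr{X}}_n)$ and hence $(y_1,\dots,y_n)$ in the closure.

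The main obstacle is precisely this bookkeeping in part (2): making the re-expansion along $\Lambda_1,\dots,\Lambda_n$ compatible with the orthogonality requirement in $\mathcal{D}^{\mathscr{X}}_n$ (mutual orthogonality of the $n$ components \emph{and} each $\langle x_i,x_i\rangle$ a projection) while simultaneously controlling $\sum\|a_k\|$ by $\mu^*_n(y_1,\dots,y_n)$ rather than by something larger like $\sum_i\|y_i\|$. I expect the dimension count $\mathrm{o\text{-}dim}(\mathscr{X})\ge n\times\mathrm{o\text{-}dim}(\mathscr{H})$ to be exactly what guarantees enough ``room'' to house $n$ mutually orthogonal copies, and separability of $\mathscr{H}$ to be what makes the approximation (rather than exact equality) argument go through with countable truncations. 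Everything else — subadditivity, the permutation and scaling axioms, Lemma \ref{orth}, Lemma \ref{compact}, Lemma \ref{base}, Russo--Dye — is available off the shelf.
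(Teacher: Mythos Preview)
Your treatment of part (1) is correct and matches the paper: Lemma \ref{orth} gives $\mu^*_n\le 1$ on $\mathcal{D}^{\mathscr{X}}_n$, the norm properties of $\mu^*_n$ push this through $\mathrm{aco}_{\mathfrak{A}}$, and closedness of the unit ball absorbs the closure.

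For part (2), your outline has the right ingredients in view (orthonormal bases, the dimension count, separability for approximation, Russo--Dye) but is missing the mechanism that actually makes the argument close, and one step as written cannot work. You propose to apply Russo--Dye to $\mathfrak{A}=\mathbb{K}(\mathscr{H})$; but $\mathbb{K}(\mathscr{H})$ is non-unital when $\mathscr{H}$ is infinite-dimensional, so $\mathfrak{A}_{[1]}=\overline{\mathrm{aco}}(\mathcal{U}(\mathfrak{A}))$ is not available there. More substantively, the ``re-expansion along $\Lambda_i$'' you describe does not by itself produce elements of $\mathrm{aco}_{\mathfrak{A}}(\mathcal{D}^{\mathscr{X}}_n)$: membership in $\mathrm{aco}_{\mathfrak{A}}$ requires the \emph{same} scalar $a_k\in\mathfrak{A}$ to act on all $n$ coordinates simultaneously, and your plan of expanding each $y_i$ in the basis and ``rotating'' the rank-one coefficients gives no handle on this joint constraint, nor on why the resulting $\sum_k\|a_k\|$ should be controlled by $\mu^*_n(y)$ rather than by $\sum_i\|y_i\|$.

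The paper resolves both issues with a single device: it packages $y=(y_1,\dots,y_n)$ into an operator $T=\sum_{i,j}\theta_{y_i,u_{ij}}\in\mathcal{L}(\mathscr{X})$, where $(u_{ij})$ is an orthonormal system with $\langle u_{ij},u_{ij}\rangle=\theta_{\xi_j,\xi_j}$ (this is where the dimension hypothesis is spent). The bound $\mu^*_n(y)\le 1$ is exactly what forces $\|T\|\le 1$. Russo--Dye is then applied in the \emph{unital} $C^*$-algebra $\mathcal{L}(\mathscr{X})$, writing $T$ as a limit of convex combinations of unitaries $U$; since any unitary maps $\mathcal{D}^{\mathscr{X}}_n$ to itself, and since $T(v_i)=y_i a_m$ for the fixed tuple $v_i=\sum_j u_{ij}$ and the finite-rank projection $a_m=\sum_{j\le m}\theta_{\xi_j,\xi_j}$, one gets $(y_1a_m,\dots,y_na_m)\in\overline{\mathrm{aco}}_{\mathfrak{A}}(\mathcal{D}^{\mathscr{X}}_n)$ with the coefficient bound coming for free from the convex combination. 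Separability of $\mathscr{H}$ then lets $a_m\to$ approximate identity, finishing the proof. The operator $T$ and the move to $\mathcal{L}(\mathscr{X})$ are the ideas your sketch is missing.
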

\begin{proof} 
	(1) 	Lemma \ref{orth} ensures that $\mathcal{D}^{\mathscr{X}}_n\subseteq \left( \mathscr{X}^n, \mu^*_n\right)_{[1]}$. Let $a_1,\dots,a_m\in\mathfrak{A}$ with $\sum_{i=1}^{m}\left\| a_i\right\| \leq1$. We have
	\begin{align*}
		\mu^*_n\left(\sum_{i=1}^{m}x_ia_i \right)\leq \sum_{i=1}^{m}\left\| a_i\right\| \mu^*_n\left( x_i\right)\leq1\qquad\left(x_i\in\mathcal{D}^{\mathscr{X}}_n \right) , 
	\end{align*}
where the first inequality is immediately deduced from the definition of $\mu^*$ \eqref{mos4}. Thus, 	${\rm aco}_{\mathfrak{A}}(\mathcal{D}^{\mathscr{X}}_n)\subseteq\left( \mathscr{X}^n,\mu^*_n\right)_{[1]}$. 

Let $\left(y^{(k)}:k\in\mathbb{N} \right) $ be a sequence in $\left( \mathscr{X}^n,\mu^*_n\right)_{[1]}$ such that $y^{(k)}=(y^{(k)}_1,\dots,y^{(k)}_n)\rightarrow y=(y_1,\dots,y_n)$ in the norm topology on $\mathit{l}^2_n\left(\mathscr{X}\right) $. So $\lim_{k\rightarrow\infty}\sum_{i=1}^{n}\left\|y^{(k)}_i-y_i\right\|=0$. It follows from $\eqref{powernorm}$ that
$\mu_n^*(y^{(k)})\rightarrow\mu_n^*(y)$. Hence, $\left( \mathscr{X}^n,\mu^*_n\right)_{[1]}$ is closed with respect to the norm topology on $\mathit{l}^2_n\left(\mathscr{X} \right)$. It follows that 	$\overline{\rm aco}_{\mathfrak{A}}(\mathcal{D}^{\mathscr{X}}_n)\subseteq\left( \mathscr{X}^n,\mu^*_n\right)_{[1]}$.

	(2)
	Take $y =(y_1,\dots,y_n)\in \left( \mathscr{X}^n,\mu^*_n\right)_{[1]}$. Let $(\xi_j)$ be an orthonormal basis for $\mathscr{H}$. Take $m\in\mathbb{N}$. We utilize the structure in the proof of Lemma \ref{base} to construct an orthonormal system $\left( u_{ij}: 1\leq i \leq n , 1\leq j \leq m\right)$, where $\left\langle u_{ij},u_{ij}\right\rangle=\theta_{\xi_j,\xi_j}$ for all $i, j$. The hypothesis on the dimension of $\mathscr{X}$ allows us to have $n$ elements $u_{ij},\,\, i=1, \dots, n$, for each $\xi_j$. Now, let us define $T\in \mathcal{L}\left( \mathscr{X}\right)$ by
	$$T:=\sum_{i,j}\theta_{y_i,u_{ij}}.$$
 We claim that $T\in \mathcal{L}\left(\mathscr{X}\right)_{[1]}$. To this end, let $x\in\mathscr{X}_{[1]}$. It deduces from Lemma \ref{compact} that 
	\begin{equation}\label{x}
\left\| \sum_{i,j}\left\langle x,u_{ij}\right\rangle\left\langle u_{ij},x\right\rangle\right\|\leq\|\langle x,x \rangle\|= \left\| x\right\|^2\leq 1. 
\end{equation}
We derive from \eqref{Lemma3.7} and \eqref{x} that
\begin{align*}
\left\| Tx\right\|^2=	\left\| \sum_{i,j}Tu_{ij}\left\langle u_{ij},x\right\rangle\right\|&\leq\mu^*_n\left(Tu_{11},\dots,Tu_{nm} \right)\\&=\mu^*_n\left(y_1\theta_{\xi_1,\xi_1},\dots,y_1\theta_{\xi_m,\xi_m},\dots, y_n\theta_{\xi_1,\xi_1},\dots,y_n\theta_{\xi_m,\xi_m}\right)\\&=\sup_{z\in\mathscr{X}_{[1]}}\left\|\sum_{i=1}^{n}\sum_{j=1}^{m}\left\langle z,y_i\right\rangle\theta_{\xi_j,\xi_j}\left\langle y_i,z\right\rangle \right\|\\&\leq\sup_{z\in\mathscr{X}_{[1]}}\left\|\sum_{i=1}^{n}\left\langle z,y_i\right\rangle\left\langle y_i,z\right\rangle \right\|\tag{$\sum_{j=1}^{m}\theta_{\xi_j,\xi_j}$ is a projection}\\&\leq\mu^*_n\left(y_1,\dots,y_n\right)\leq1 . 
\end{align*}
 We see that $T\in \mathcal{L}\left(\mathscr{X}\right)_{[1]}$, and so $T\in \overline{\rm co}({\mathcal U}\left( \mathcal{L}\left( \mathscr{X}\right) \right) $ by the Russo--Dye theorem. Set $v_i=\sum_{j=1}^{m}u_{ij}$ and $a_m=\sum_{j=1}^{m}\theta_{\xi_j,\xi_j}$. Since $\left\langle v_i,v_i\right\rangle=\sum_{j=1}^{m}\left\langle u_{ij},u_{ij}\right\rangle=a_m$, we have $(v_1,\dots,v_n)\in\mathcal{D}_n^{\mathscr{X}}$. In addition, $T\left( v_i \right)=\sum_{j=1}^{m}\theta_{y_i,u_{ij}}u_{ij}=y_i\sum_{j=1}^{m}\left\langle u_{ij},u_{ij}\right\rangle=y_ia_m$. There exists a sequence $\left( T_k: k\in\mathbb{N}\right) $ in ${\rm co}({\mathcal U}\left( \mathcal{L}\left( \mathscr{X}\right) \right)$ such that $T_k\rightarrow T$. Evidently, $(Uv_1,\dots,Uv_n)\in\mathcal{D}_n^{\mathscr{X}}$ for each $U\in{\mathcal U}\left( \mathcal{L}\left( \mathscr{X}\right) \right)$. It follows that 
	\begin{equation}\label{k}
 \left( T_k\left( v_1 \right) ,\dots,T_k\left( v_n\right) \right)\in {\rm co}(\mathcal{D}^{\mathscr{X}}_n).
\end{equation}
Moreover, $ {\rm co}(\mathcal{D}^{\mathscr{X}}_n) 
 ={\rm aco}(\mathcal{D}^{\mathscr{X}}_n)$ since $\mathcal{D}_n^{\mathscr{X}}$ is a balanced subset of $\mathit{l}^2_n\left( \mathscr{X}\right)$. Furthermore, as we mentioned above, ${\rm aco}(\mathcal{D}^{\mathscr{X}}_n)\subseteq\overline{\rm aco}_{\mathfrak{A}}(\mathcal{D}^{\mathscr{X}}_n)$. Hence
 	\begin{align*}
 	\left( T_k\left( v_1 \right) ,\dots,T_k\left( v_n\right) \right)\in \overline{\rm aco}_{\mathfrak{A}}(\mathcal{D}^{\mathscr{X}}_n)\tag{by \eqref{k}}.
 \end{align*}
It deduces that
	\begin{align*}
\left(y_1a_m,\dots,y_na_m\right)=\left( T\left( v_1 \right) ,\dots,T\left( v_n\right) \right)=\lim_{k\to\infty}\left( T_k\left( v_1 \right) ,\dots,T_k\left( v_n\right) \right)\in \overline{\rm aco}_{\mathfrak{A}}(\mathcal{D}^{\mathscr{X}}_n).
\end{align*} 
 Since $\left(a_m: m\in\mathbb{N} \right) $ is an approximate unit for $\mathbb{K}(\mathscr{H})$ \cite[p. 78]{mor}, we derive that
	\begin{align*}
\left(y_1,\dots,y_n\right)=\lim_{m\to\infty}\left(y_1a_m,\dots,y_na_m\right)\in\overline{\rm aco}_{\mathfrak{A}}\left( \mathcal{D}^{\mathscr{X}}_n\right).
\end{align*}
\end{proof}
 \begin{lemma}\label{multinorm}
 	Suppose that $\mathscr{X}$ is a Hilbert $\mathfrak{A}$-module. Let $x=\left( x_1,\dots,x_n\right) \in\mathscr{X}^n$. Then
 \begin{equation}\label{orthog}
 \sup\left\| \sum_{i=1}^{n} \left\langle x_i,v_i\right\rangle a_i\right\|\leq	\left\|x\right\|^{\mathscr{X}}_n,
 \end{equation}
 where the supremum is taken over all $(a_1,\dots,a_n) \in (\mathit{l}\left( \mathfrak{A}\right)^2_n)_{[1]}$ and $\left(v_1,\dots,v_n \right)\in \mathcal{D}_n^\mathscr{X}$.
 \end{lemma}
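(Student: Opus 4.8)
The plan is to fix an admissible pair $\left(v_1,\dots,v_n\right)\in\mathcal{D}_n^{\mathscr{X}}$ and $\left(a_1,\dots,a_n\right)\in\left(\mathit{l}^2_n(\mathfrak{A})\right)_{[1]}$ (so that $\left\|\sum_{i=1}^n a_i^*a_i\right\|\leq 1$), to bound $\left\|\sum_{i=1}^n\left\langle x_i,v_i\right\rangle a_i\right\|$ by $\left\|x\right\|_n^{\mathscr{X}}$ uniformly, and then to pass to the supremum. Write $e_i:=\left\langle v_i,v_i\right\rangle$, which is a projection in $\mathfrak{A}$. The first, purely algebraic step is to record that $v_ie_i=v_i$; this follows at once by expanding $\left\langle v_i-v_ie_i,\,v_i-v_ie_i\right\rangle=e_i-e_i-e_i+e_i=0$. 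Two consequences are immediate: first, $\left\langle x_i,v_i\right\rangle e_i=\left\langle x_i,v_ie_i\right\rangle=\left\langle x_i,v_i\right\rangle$ for every $i$; second, each $\theta_{v_i,v_i}$ is a self-adjoint idempotent in $\mathcal{L}\left(\mathscr{X}\right)$ with $\theta_{v_i,v_i}\theta_{v_j,v_j}=0$ whenever $i\neq j$ (using $v_ie_i=v_i$ for idempotency and $\left\langle v_i,v_j\right\rangle=0$ for orthogonality), so $\left(\theta_{v_i,v_i}:1\leq i\leq n\right)$ is a family of mutually orthogonal projections.

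Next I would collapse the sum into a single inner product in $\mathscr{X}$. Put $u:=\sum_{i=1}^n v_i\left\langle v_i,x_i\right\rangle=\sum_{i=1}^n\theta_{v_i,v_i}\left(x_i\right)\in\mathscr{X}$. Using $\left\langle v_i,v_j\right\rangle=0$ for $i\neq j$ together with $\left\langle x_j,v_j\right\rangle e_j=\left\langle x_j,v_j\right\rangle$, one computes $\left\langle u,v_j\right\rangle=\left\langle x_j,v_j\right\rangle$ for each $j$; consequently
\begin{align*}
\sum_{i=1}^n\left\langle x_i,v_i\right\rangle a_i=\sum_{i=1}^n\left\langle u,v_i\right\rangle a_i=\sum_{i=1}^n\left\langle u,v_ia_i\right\rangle=\left\langle u,w\right\rangle,
\end{align*}
where $w:=\sum_{i=1}^n v_ia_i\in\mathscr{X}$. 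The Cauchy--Schwarz inequality for Hilbert $C^*$-modules then gives $\left\|\sum_{i=1}^n\left\langle x_i,v_i\right\rangle a_i\right\|=\left\|\left\langle u,w\right\rangle\right\|\leq\left\|u\right\|\left\|w\right\|$. Moreover $\left\langle w,w\right\rangle=\sum_{i,j}a_i^*\left\langle v_i,v_j\right\rangle a_j=\sum_{i=1}^n a_i^*e_ia_i\leq\sum_{i=1}^n a_i^*a_i$, because $0\leq e_i\leq 1$ (passing to the unitization of $\mathfrak{A}$ if it is non-unital), whence $\left\|w\right\|^2=\left\|\sum_{i=1}^n a_i^*e_ia_i\right\|\leq\left\|\sum_{i=1}^n a_i^*a_i\right\|\leq 1$. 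Therefore $\left\|\sum_{i=1}^n\left\langle x_i,v_i\right\rangle a_i\right\|\leq\left\|u\right\|$.

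It remains to bound $\left\|u\right\|=\left\|\sum_{i=1}^n\theta_{v_i,v_i}\left(x_i\right)\right\|$ by $\left\|x\right\|_n^{\mathscr{X}}$, and here the only subtlety is that $\sum_{i=1}^n\theta_{v_i,v_i}$ need not equal $I_{\mathscr{X}}$, so this family alone is not of the type admitted in \eqref{Hilbert multi norm}. I would repair this by setting $Q:=I_{\mathscr{X}}-\sum_{i=1}^n\theta_{v_i,v_i}$, which is again a projection, so that $\left(\theta_{v_1,v_1},\dots,\theta_{v_n,v_n},Q\right)$ is a family of $n+1$ mutually orthogonal projections in $\mathcal{L}\left(\mathscr{X}\right)$ summing to $I_{\mathscr{X}}$. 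Feeding this family and the $(n+1)$-tuple $\left(x_1,\dots,x_n,0\right)$ into \eqref{Hilbert multi norm} yields $\left\|u\right\|=\left\|\sum_{i=1}^n\theta_{v_i,v_i}x_i+Q(0)\right\|\leq\left\|\left(x_1,\dots,x_n,0\right)\right\|_{n+1}^{\mathscr{X}}$, and Axiom (A3) for the multi-norm $\left(\left\|\cdot\right\|_m^{\mathscr{X}}:m\in\mathbb{N}\right)$ identifies the right-hand side with $\left\|\left(x_1,\dots,x_n\right)\right\|_n^{\mathscr{X}}=\left\|x\right\|_n^{\mathscr{X}}$. Chaining the three estimates and taking the supremum over all $\left(v_1,\dots,v_n\right)\in\mathcal{D}_n^{\mathscr{X}}$ and $\left(a_1,\dots,a_n\right)\in\left(\mathit{l}^2_n(\mathfrak{A})\right)_{[1]}$ delivers \eqref{orthog}. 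There is no single hard estimate here; the points to watch are that the identity $v_ie_i=v_i$ is what simultaneously makes $\theta_{v_i,v_i}$ idempotent and forces $\left\langle u,v_j\right\rangle=\left\langle x_j,v_j\right\rangle$, and that one must enlarge the projection family to $n+1$ members (compensating through Axiom (A3)) before the Hilbert $C^*$-multi-norm can legitimately be invoked.
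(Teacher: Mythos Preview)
Your proof is correct and follows essentially the same route as the paper: verify $v_ie_i=v_i$, apply Cauchy--Schwarz, and bound by $\left\|x\right\|_n^{\mathscr{X}}$ via the projections $\theta_{v_i,v_i}$. The only cosmetic difference is that the paper absorbs the residual projection into $P_n:=I_{\mathscr{X}}-\sum_{i=1}^{n-1}\theta_{v_i,v_i}$ (so only $n$ projections are used, and $\theta_{v_n,v_n}\leq P_n$ suffices for the last coordinate), whereas you add an $(n+1)$-st projection $Q$ and then invoke Axiom~(A3).
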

 \begin{proof}
 	 Let $\left( v_1,\dots,v_n\right)\in\mathcal{D}^{\mathscr{X}}_n $ such that $\left\langle v_i,v_i\right\rangle$ is a projection for $1\leq i\leq n$. Then $\left\langle v_i\left\langle v_i,v_i\right\rangle-v_i,v_i\left\langle v_i,v_i\right\rangle-v_i\right\rangle=0$. So $v_i\left\langle v_i,v_i\right\rangle=v_i$. For $1\leq i\leq n-1$, set $P_i=\theta_{v_i,v_i}$. Suppose that $P_n=I_{\mathscr{X}}-\sum_{i=1}^{n-1}\theta_{v_i,v_i}$. Then it immediately follows that $\theta_{v_n,v_n}\leq P_n$ and that $\left( P_i:1\leq i \leq n\right)$ is the family of orthogonal projections summing to $I_{\mathscr{X}}$. Therefore,
 	\begin{align*}
 	\left| \left\langle v_i,x\right\rangle\right|^2= \left\langle x,v_i\right\rangle	\left\langle v_i,x\right\rangle\leq\left\langle P_ix,P_ix \right\rangle\quad (1\leq i\leq n).
 	\end{align*}
 Take $(a_1,\dots,a_n) \in (\mathit{l}\left( \mathfrak{A}\right)^2_n)_{[1]}$. It follows from the Cauchy--Schwarz inequality that 
 		\begin{align*}
 		\left\| \sum_{i=1}^{n} \left\langle x_i,v_i\right\rangle a_i\right\|&=\left\|\left\langle \left(\left\langle v_1,x_1 \right\rangle,\dots,\left\langle v_n,x_n \right\rangle\right) ,\left(a_1,\dots,a_n\right) 
 		\right\rangle_{\mathit{l}^2_n\left( \mathfrak{A}\right)} \right\|\\& \leq 	\left\| \sum_{i=1}^{n} \left\langle x_i,v_i\right\rangle\left\langle v_1,x_1 \right\rangle\right\|\leq\left\| \sum_{i=1}^{n}\left\langle P_ix,P_ix \right\rangle\right\|\leq\left\|x\right\|^{\mathscr{X}}_n.
 	\end{align*}
 	
 \end{proof}
\begin{lemma}\label{isomorphism}
Let $\mathscr{X}$ be a Hilbert $\mathbb{K}\left(\mathscr{H}\right)$-module. If $\tau\in\mathcal{P}\left(\mathbb{K}\left(\mathscr{H}\right)\right)$, then the map 	$\phi_{\tau}:\mathcal{L}(\mathscr{X})\rightarrow\mathcal{L}(\mathscr{H}_{\tau})$ constructed at \eqref{mos5} is an isometric $*$-isomorphism.
\end{lemma}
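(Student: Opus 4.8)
The plan is to reduce the statement to a convenient orthonormal basis and then identify $\phi_\tau$ with the identity representation. Recall first that the pure states of $\mathbb{K}(\mathscr{H})$ are precisely the vector states, so I may fix a unit vector $\eta\in\mathscr{H}$ with $\tau(a)=\langle a\eta,\eta\rangle$, and put $e=\theta_{\eta,\eta}$, a minimal projection with $\tau(e)=1$. Applying Lemma \ref{base} to the constant net $\eta_\lambda\equiv\eta$ produces an orthonormal basis $(v_\lambda)_{\lambda\in\Lambda}$ of $\mathscr{X}$ with $\langle v_\lambda,v_\lambda\rangle=e$ for every $\lambda$. This forces $v_\lambda e=v_\lambda$, hence $\langle v_\lambda,x\rangle=e\langle v_\lambda,x\rangle$ for all $x$, so that every coordinate $\langle v_\lambda,x\rangle$ is a rank-$\le 1$ operator with range inside $\mathbb{C}\eta$; this rank-one structure, together with minimality of $e$, is what drives all the computations below. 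In particular one records the key identity $\tau(b^*b)=|\tau(b)|^2$ whenever $b=eb$ (a one-line computation with rank-one operators), which yields $v_\lambda b-\tau(b)v_\lambda\in\mathcal{N}_\tau$ for every $b\in\mathbb{K}(\mathscr{H})$.

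Next I would show that $(\bar v_\lambda:=v_\lambda+\mathcal{N}_\tau)_{\lambda\in\Lambda}$ is an orthonormal basis of $\mathscr{H}_\tau$. Orthonormality is immediate from $\langle\bar v_\lambda,\bar v_\mu\rangle_{\mathscr{H}_\tau}=\tau(\langle v_\lambda,v_\mu\rangle)=\delta_{\lambda\mu}\tau(e)=\delta_{\lambda\mu}$. For totality, combine Lemma \ref{compact} (giving $x=\sum_\lambda v_\lambda\langle v_\lambda,x\rangle$ in $\mathscr{X}$) with the contraction $\mathscr{X}\to\mathscr{H}_\tau$ (valid since $\tau(\langle y,y\rangle)\le\|\langle y,y\rangle\|$) and the identity above applied to $b=\langle v_\lambda,x\rangle$, obtaining $\bar x=\sum_\lambda\tau(\langle v_\lambda,x\rangle)\bar v_\lambda$ for every $x$; thus the closed span of the $\bar v_\lambda$ contains the dense subspace $\mathscr{X}/\mathcal{N}_\tau$. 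The same identity gives $\phi_\tau(\theta_{v_\lambda,v_\mu})=\theta_{\bar v_\lambda,\bar v_\mu}$ in $\mathcal{K}(\mathscr{H}_\tau)$, and since $\theta_{v_\lambda\theta_{\eta,\psi},\,v_\mu\theta_{\eta,\phi}}=\langle\psi,\phi\rangle\,\theta_{v_\lambda,v_\mu}$ one deduces $\mathcal{K}(\mathscr{X})=\overline{\mathrm{span}}\{\theta_{v_\lambda,v_\mu}:\lambda,\mu\in\Lambda\}$. Hence $\phi_\tau$ restricts to a $*$-isomorphism of $\mathcal{K}(\mathscr{X})$ onto $\mathcal{K}(\mathscr{H}_\tau)$, which is in particular nondegenerate as a representation of $\mathcal{K}(\mathscr{X})$ on $\mathscr{H}_\tau$.

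For injectivity of $\phi_\tau$ on all of $\mathcal{L}(\mathscr{X})$: if $\phi_\tau(T)=0$, then $\tau(\langle Tx,y\rangle)=\langle\phi_\tau(T)\bar x,\bar y\rangle=0$ for all $x,y\in\mathscr{X}$. If $T\neq 0$, pick $x_0$ with $Tx_0\neq 0$, so $\langle Tx_0,Tx_0\rangle$ is a nonzero positive element of $\mathbb{K}(\mathscr{H})$; since $b^*\langle Tx_0,Tx_0\rangle a=\langle T(x_0b),T(x_0a)\rangle$ is of the form $\langle Tx,y\rangle$, the functional $\tau$ vanishes on the two-sided ideal $\mathbb{K}(\mathscr{H})\langle Tx_0,Tx_0\rangle\mathbb{K}(\mathscr{H})$, hence by continuity and simplicity of $\mathbb{K}(\mathscr{H})$ on all of $\mathbb{K}(\mathscr{H})$, contradicting that $\tau$ is a state. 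Therefore $\phi_\tau$ is an injective $*$-homomorphism between $C^*$-algebras, hence isometric.

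The remaining, and main, point is surjectivity. The cleanest route is to invoke the standard identifications $\mathcal{L}(\mathscr{X})\cong\mathcal{M}(\mathcal{K}(\mathscr{X}))$ and $\mathcal{L}(\mathscr{H}_\tau)=\mathcal{M}(\mathcal{K}(\mathscr{H}_\tau))$ with the multiplier algebra (see \cite{lance}): the nondegenerate $*$-isomorphism $\phi_\tau|_{\mathcal{K}(\mathscr{X})}$ onto the ideal $\mathcal{K}(\mathscr{H}_\tau)$ extends uniquely to a $*$-isomorphism of the multiplier algebras, and by uniqueness this extension must coincide with $\phi_\tau$, which is therefore onto. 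Alternatively, and more in the hands-on spirit of the preceding lemmas, one can construct an explicit preimage of $A\in\mathcal{L}(\mathscr{H}_\tau)$: with matrix entries $c_{\mu\lambda}=\langle\bar v_\mu,A\bar v_\lambda\rangle$, set $T_A(v_\lambda a)=\sum_\mu v_\mu(c_{\mu\lambda}\,ea)$ and extend by $\mathbb{K}(\mathscr{H})$-linearity through $x=\sum_\lambda v_\lambda\langle v_\lambda,x\rangle$; the series converge in norm because each $ea$ is rank $\le 1$ with range in $\mathbb{C}\eta$, so the relevant partial sums are dominated in the order of $\mathcal{L}(\mathscr{H})$ by $\|A\|^2\langle v_\lambda a,v_\lambda a\rangle\in\mathbb{K}(\mathscr{H})$ and $\mathbb{K}(\mathscr{H})$ is a hereditary ideal, whence $\|T_A\|\le\|A\|$, $T_A^*=T_{A^*}$, and $\phi_\tau(T_A)=A$ by checking coordinates against the $\bar v_\mu$. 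Either way $\phi_\tau$ is an isometric $*$-isomorphism. I expect the delicate step to be exactly this surjectivity — either verifying convergence and adjointability of the $T_A$, or justifying the passage from the ideal isomorphism to the multiplier algebras — and this is precisely where the uniform choice $\langle v_\lambda,v_\lambda\rangle=e$ afforded by Lemma \ref{base} and the minimality of $e$ are indispensable.
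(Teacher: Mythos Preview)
Your argument is correct, but it follows a substantially different path from the paper's. The paper's proof is essentially two citations to Baki\'{c}--Gulja\v{s} \cite{compact}: after noting that $\tau=\tau_e$ is a vector state, it invokes \cite[Remark~4]{compact} to identify $\mathscr{X}/\mathcal{N}_\tau$ with $\mathscr{X}\theta_{e,e}$ (so that $\mathscr{H}_\tau=\mathscr{X}/\mathcal{N}_\tau$ is already complete), and then appeals directly to \cite[Theorem~5]{compact} for the isometric $*$-isomorphism. You instead give a self-contained argument: using Lemma~\ref{base} to force $\langle v_\lambda,v_\lambda\rangle=e$, you build an explicit orthonormal basis of $\mathscr{H}_\tau$, identify $\phi_\tau$ on the rank-one generators, deduce that $\phi_\tau$ restricts to an isomorphism $\mathcal{K}(\mathscr{X})\to\mathcal{K}(\mathscr{H}_\tau)$, prove injectivity via the simplicity of $\mathbb{K}(\mathscr{H})$, and finish surjectivity through the multiplier identification $\mathcal{L}(\mathscr{X})\cong\mathcal{M}(\mathcal{K}(\mathscr{X}))$ from \cite{lance}. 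The paper's route is far shorter but outsources the real content to \cite{compact}; your route is longer but stays within the toolkit already developed in the paper (Lemmas~\ref{compact} and~\ref{base}) plus standard multiplier-algebra facts, and in effect reproves the Baki\'{c}--Gulja\v{s} theorem in this special case. Both are valid; the choice is between brevity with an external black box and a more transparent but lengthier derivation.
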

\begin{proof}
It follows from \cite[Example 5.1.1]{mor} that 
\begin{equation}
\mathcal{P}\left(\mathbb{K}\left(\mathscr{H}\right)\right)=\left\lbrace\tau_x:\mathbb{K}\left(\mathscr{H}\right)\rightarrow\mathbb{C}, u\mapsto\left\langle ux,x\right\rangle {\rm ~for~some~unit~vector~} x\right\rbrace .
\end{equation}
 So there exists a unit vector $e$ such that $\tau=\tau_e$. It deduces from \cite[Remark 4]{compact} that the map $\mathscr{X}/\mathcal{N_{\tau}}\rightarrow\mathscr{X}\theta_{e,e}$ defined by $x+N_{\tau}\mapsto x\theta_{e,e}$ is an isometric isomorphism. So 
 $\mathscr{H}_{\tau}=\mathscr{X}/\mathcal{N_{\tau}}$. We derive from \cite[Theorem 5]{compact} that the map 	$\phi_{\tau}:\mathcal{L}(\mathscr{X})\rightarrow\mathcal{L}(\mathscr{H}_{\tau})$ is an isometric $*$-isomorphism.
\end{proof}
 
\begin{theorem}
Suppose that $\mathscr{X}$ is a Hilbert $\mathbb{K}\left(\mathscr{H}\right)$-module and that $x=\left(x_1,\dots,x_n \right)\in\mathscr{X}^n$. Then
\begin{enumerate}
	\item 
	$\left\|x\right\|_n^{\mathscr{X}}=\left\|x\right\|_n^{\mathcal{P}\left(\mathfrak{A} \right)}$; 
	\item
	if $\mathscr{H}$ is separable and $\text{o-dim}\left(\mathscr{X}\right)\geq n\times\left( \text{o-dim}\left(\mathscr{H}\right)\right)$, then	$\left\|x\right\|_n^{\mathscr{X}}=\left\|x\right\|_n^{*}$.
\end{enumerate}
\end{theorem}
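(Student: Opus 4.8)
For part (1), one inequality, $\left\|x\right\|_n^{\mathscr{X}}\le\left\|x\right\|_n^{\mathcal{P}\left(\mathfrak{A}\right)}$, is already available from Proposition \ref{module}(2), so the plan is to establish the reverse. Fix $\tau\in\mathcal{P}\left(\mathfrak{A}\right)$, where $\mathfrak{A}=\mathbb{K}\left(\mathscr{H}\right)$. By Lemma \ref{isomorphism} the $*$-homomorphism $\phi_{\tau}\colon\mathcal{L}(\mathscr{X})\to\mathcal{L}(\mathscr{H}_{\tau})$ of \eqref{mos5} is an isometric $*$-isomorphism; in particular it is surjective and $\phi_{\tau}(I_{\mathscr{X}})=I_{\mathscr{H}_{\tau}}$. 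Hence, given any family $(Q_i\colon1\le i\le n)$ of mutually orthogonal projections in $\mathcal{L}(\mathscr{H}_{\tau})$ summing to $I_{\mathscr{H}_{\tau}}$, the operators $P_i:=\phi_{\tau}^{-1}(Q_i)$ form a family of mutually orthogonal projections in $\mathcal{L}(\mathscr{X})$ summing to $I_{\mathscr{X}}$, and $Q_i(x_i+\mathcal{N}_{\tau})=\phi_{\tau}(P_i)(x_i+\mathcal{N}_{\tau})=P_ix_i+\mathcal{N}_{\tau}$. Writing $u:=\sum_{i=1}^{n}P_ix_i$, this gives
\[
\Bigl\|\sum_{i=1}^{n}Q_i(x_i+\mathcal{N}_{\tau})\Bigr\|_{\mathscr{H}_{\tau}}=\bigl\|u+\mathcal{N}_{\tau}\bigr\|_{\mathscr{H}_{\tau}}=\tau(\langle u,u\rangle)^{1/2}\le\|u\|\le\left\|x\right\|_n^{\mathscr{X}},
\]
the middle inequality holding because $\tau$ is a state. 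Taking the supremum over all such families $(Q_i)$ gives $\left\|(x_1+\mathcal{N}_{\tau},\dots,x_n+\mathcal{N}_{\tau})\right\|_n^{\mathscr{H}_{\tau}}\le\left\|x\right\|_n^{\mathscr{X}}$, and then the supremum over $\tau$ yields $\left\|x\right\|_n^{\mathcal{P}\left(\mathfrak{A}\right)}\le\left\|x\right\|_n^{\mathscr{X}}$, which proves (1). Note that this part uses no hypothesis on dimensions.

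For part (2), Proposition \ref{module}(1) gives $\left\|x\right\|_n^{\mathscr{X}}\le\left\|x\right\|_n^{*}$, so it remains to prove $\left\|x\right\|_n^{*}\le\left\|x\right\|_n^{\mathscr{X}}$. Recall that, by \eqref{mos3}, $\left\|x\right\|_n^{*}$ equals the supremum of $\bigl\|(\langle y_1,x_1\rangle,\dots,\langle y_n,x_n\rangle)\bigr\|_{\mathit{l}_n^2(\mathfrak{A})}$ over all $(y_1,\dots,y_n)$ with $\mu^*_n(y_1,\dots,y_n)\le1$. Let $M$ denote the supremum on the left-hand side of \eqref{orthog}, so that $M\le\left\|x\right\|_n^{\mathscr{X}}$ by Lemma \ref{multinorm}; I claim $\left\|x\right\|_n^{*}\le M$. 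The functional $(y_1,\dots,y_n)\mapsto\bigl\|(\langle y_1,x_1\rangle,\dots,\langle y_n,x_n\rangle)\bigr\|_{\mathit{l}_n^2(\mathfrak{A})}$ is norm-continuous on $\mathit{l}_n^2(\mathscr{X})$, and by Lemma \ref{CO}(2) — which is where the hypotheses that $\mathscr{H}$ is separable and $\text{o-dim}(\mathscr{X})\ge n\times\text{o-dim}(\mathscr{H})$ enter — the set ${\rm aco}_{\mathfrak{A}}(\mathcal{D}^{\mathscr{X}}_n)$ is dense in $(\mathscr{X}^n,\mu^*_n)_{[1]}$; hence it is enough to bound this functional by $M$ on ${\rm aco}_{\mathfrak{A}}(\mathcal{D}^{\mathscr{X}}_n)$. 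So write $y_i=\sum_k v_k^{(i)}c_k$ with $(v_k^{(1)},\dots,v_k^{(n)})\in\mathcal{D}^{\mathscr{X}}_n$ and $\sum_k\|c_k\|\le1$. Using that the $\mathit{l}_n^2(\mathfrak{A})$-norm of any $z$ equals $\sup\{\left\|\langle z,w\rangle\right\|\colon\left\|w\right\|\le1\}$, it suffices to check $\bigl\|\sum_i\langle x_i,y_i\rangle a_i\bigr\|\le M$ for every $(a_1,\dots,a_n)\in\mathit{l}_n^2(\mathfrak{A})_{[1]}$. Indeed $\sum_i\langle x_i,y_i\rangle a_i=\sum_k\bigl(\sum_i\langle x_i,v_k^{(i)}\rangle c_ka_i\bigr)$; for each $k$ the tuple $(c_ka_1,\dots,c_ka_n)$ has $\mathit{l}_n^2(\mathfrak{A})$-norm at most $\|c_k\|$, so the definition of $M$ applied to $(v_k^{(1)},\dots,v_k^{(n)})\in\mathcal{D}^{\mathscr{X}}_n$ gives $\bigl\|\sum_i\langle x_i,v_k^{(i)}\rangle c_ka_i\bigr\|\le\|c_k\|\,M$, and summing over $k$ yields $\le M$. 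Therefore $\left\|x\right\|_n^{*}\le M\le\left\|x\right\|_n^{\mathscr{X}}$ (so in fact all three coincide), which proves (2).

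The $C^*$-algebraic bookkeeping — that an isometric $*$-isomorphism carries mutually orthogonal projections summing to the identity to such a family, the estimate $\tau(\langle u,u\rangle)^{1/2}\le\|u\|$ for a state $\tau$, and the dual description $\|z\|=\sup\{\left\|\langle z,w\rangle\right\|\colon\left\|w\right\|\le1\}$ of a Hilbert-module norm — is routine. I expect the main obstacle to be the density-plus-continuity step in part (2): one must verify carefully that the supremum defining $\left\|\cdot\right\|_n^{*}$ is not lowered by restricting $(y_1,\dots,y_n)$ to the dense set ${\rm aco}_{\mathfrak{A}}(\mathcal{D}^{\mathscr{X}}_n)$ supplied by Lemma \ref{CO}(2) — precisely where the orthogonal-dimension hypothesis on $\mathscr{X}$ is consumed — while keeping the two distinct unit balls in play, the $\mu^*_n$-ball in $\mathscr{X}^n$ and the norm ball of $\mathit{l}_n^2(\mathfrak{A})$, correctly aligned throughout the reduction to $\mathcal{D}^{\mathscr{X}}_n$ and the appeal to Lemma \ref{multinorm}.
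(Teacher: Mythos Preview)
Your proof is correct and follows essentially the same route as the paper: part~(1) is identical (invert the $*$-isomorphism $\phi_\tau$ of Lemma~\ref{isomorphism} to pull back projections, then use Proposition~\ref{module}(2)), and part~(2) is the same density-via-Lemma~\ref{CO}(2) plus Lemma~\ref{multinorm} argument, only reorganized --- you use the dual description of the $\ell_n^2(\mathfrak{A})$-norm to estimate directly, whereas the paper bounds $\bigl\|\sum_i|\langle y_i,x_i\rangle|^2\bigr\|$ by $\left\|x\right\|_n^{*}\left\|x\right\|_n^{\mathscr{X}}$ and then cancels one factor of $\left\|x\right\|_n^{*}$.
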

\begin{proof}
(1) Suppose that $\tau\in\mathcal{P}\left(\mathbb{K}\left(\mathscr{H}\right)\right)$. It deduces from Lemma \ref{isomorphism} that a family of mutually orthogonal projections in $\mathcal{L}(\mathscr{H}_{\tau})$ summing to $I_{\mathscr{H}_{\tau}}$ has the form $(\phi_{\tau}(P_i):1\leq i\leq n)$, where $(P_i:1\leq i\leq n)$ is a family of mutually orthogonal projections summing to $I_{\mathscr{X}}$. Thus, 
\begin{align*}
\left\|\sum_{i=1}^{n}\phi_{\tau}(P_i)(x_i+\mathcal{N}_{\tau})\right\|=\left\|\sum_{i=1}^{n}P_ix_i+\mathcal{N}_{\tau}\right\|\leq\left\|\sum_{i=1}^{n}P_ix_i\right\|.
\end{align*}
Therefore, $\left\|x\right\|_n^{\mathcal{P}\left(\mathfrak{A} \right)}\leq\left\|x\right\|_n^{\mathscr{X}}$, and we conclude from Proposition \ref{module}(2) that $\left\|x\right\|_n^{\mathscr{X}}=\left\|x\right\|_n^{\mathcal{P}\left(\mathfrak{A} \right)}$.

(2) Let $x =(x_1,\dots,x_n)\in \mathscr{X}^n$. Suppose that $y =(y_1,\dots,y_n)\in \mathscr{X}^n$ with $\mu_n^*(y_1,\dots,y_n)\leq1$. It follows from Lemma \ref{CO}(2) that $y\in\left( \mathscr{X}^n, \mu^*_n\right)_{[1]}=\overline{\rm aco}_{\mathfrak{A}}(\mathcal{D}^{\mathscr{X}}_n)$. Then
\begin{equation}\label{dence}
\left\|x\right\|^*_n=\sup\left\lbrace \left\|\sum_{i=1}^n\left\langle x_i,y_i\right\rangle\left\langle y_i,x_i\right\rangle\right\|^{1/2}:y\in{\rm aco}_{\mathfrak{A}}(\mathcal{D}^{\mathscr{X}}_n)\right\rbrace.
\end{equation}
Let $\sum_{j=1}^{m}z_ja_j\in {\rm aco}_{\mathfrak{A}}(\mathcal{D}^{\mathscr{X}}_n)$ with $z_{j}=\left(z_{1j},\dots,z_{nj} \right)\in\mathcal{D}^{\mathscr{X}}_n $ and $\sum_{j=1}^{m}\left\| a_j\right\| \leq1$. We arrive at
\begin{align*}
&\left\|\sum_{i=1}^n\left\langle x_i,\sum_{j=1}^{m}z_{ij}a_j\right\rangle \left\langle \sum_{j=1}^{m}z_{ij}a_j,x_i\right\rangle\right\|\\&\qquad=\left\|\sum_{j=1}^{m}\sum_{i=1}^n\left\langle x_i,z_{ij}\right\rangle a_j\left\langle \sum_{j=1}^{m}z_{ij}a_j,x_i\right\rangle\right\|\\&\qquad=	\left\|x\right\|^*_n\left\|\sum_{j=1}^{m}\left\|a_j\right\|\left( \sum_{i=1}^n\left\langle x_i,z_{ij}\right\rangle \frac{a_j\left\langle \sum_{j=1}^{m}z_{ij}a_j,x_i\right\rangle}{\left\|a_j\right\|\left\|x\right\|^*_n}\right) \right\|.
\end{align*}
 Set $b_{ij}=\frac{a_j\left\langle \sum_{j=1}^{m}z_{ij}a_j,x_i\right\rangle}{\left\|a_j\right\|\left\|x\right\|^*_n}$. One can easily verify that $\left(b_{1j},\dots,b_{nj}\right)\in\mathcal{L}^2_n\left(\mathfrak{A}\right)_{[1]}$. It follows from Lemma \ref{multinorm} that
 \begin{align*}
 	\left\|\sum_{i=1}^n\left\langle x_i,\sum_{j=1}^{m}z_{ij}a_j\right\rangle \left\langle \sum_{j=1}^{m}z_{ij}a_j,x_i\right\rangle\right\|\leq\left\|x\right\|^*_n\left\|x\right\|^{\mathscr{X}}_n.
 \end{align*}
 It is deduced from \eqref{dence} that $\left\|x\right\|^*_n\leq\left\|x\right\|^{\mathscr{X}}_n$. We conclude from Proposition \ref{module}(1) that $\left\|x\right\|^*_n=\left\|x\right\|^{\mathscr{X}}_n$.
\end{proof}

\section{Application to the decomposition of a Hilbert $C^*$-module}
Dales and Polyakov \cite{2012} presented the notions of small and orthogonal decompositions with respect to a multi-normed space $\left(\left( \mathcal{X}^n,\left\|\cdot\right\|_n\right):n\in\mathbb{N}\right)$. We study these notions in the setting of Hilbert $C^*$-modules. Suppose that $\mathscr{X}$ is a Hilbert $\mathfrak{A}$-module. We recall $\mathscr{X} = \mathscr{X}_1\oplus \cdots \oplus \mathscr{X}_n$ a direct sum decomposition
of $\mathscr{X}$ if $\mathscr{X}_1, \dots, \mathscr{X}_n$ are complemented (closed) submodules of $\mathscr{X}$ by allowing the possibility that $\mathscr{X}_i=0$
for some $1\leq i\leq n$. The decomposition $\mathscr{X}=\mathscr{X}_1\oplus\dots\oplus \mathscr{X}_n$ is hermitian if $\left\|\alpha_1x_1+\dots+\alpha_nx_n\right\|=\left\|x_1+\dots+x_n\right\|$, where $\alpha_i\in\mathbb{C}$ with $\left| \alpha_i\right|=1 $ and $x_i\in \mathscr{X}_i\,\;(1\leq i\leq n)$.
Let $\mathfrak{A}$ be a unital $C^*$-algebra. Set $\mathscr{X}=\mathit{l}_2^2({\mathfrak{A}})$, $\mathscr{X}_1=\{(a,0):a\in\mathfrak{A}\}$, and $\mathscr{X}_2=\{(0,b):b\in\mathfrak{A}\}$. Clearly $\mathscr{X}_1$ and $\mathscr{X}_2$ are complemented submodules of $\mathscr{X}$. The direct sum decomposition 
$\mathscr{X}=\mathscr{X}_1\oplus \mathscr{X}_2$ is hermitian.
Moreover, we can consider $\mathscr{Y}_1=\{(a,a):a\in\mathfrak{A}\}$ and $\mathscr{Y}_2=\{(0,b):b\in\mathfrak{A}\}$. Clearly $\mathscr{Y}_1$ and $\mathscr{Y}_2$ are complemented submodules of $\mathscr{X}$. Assume that the direct sum decomposition 
$\mathscr{X}=\mathscr{Y}_1\oplus \mathscr{Y}_2$ is hermitian. Then $1=\left\|\left( 1_{\mathfrak{A}},1_{\mathfrak{A}}\right)- \left( 0,1_{\mathfrak{A}}\right) \right\|=\left\|\left( 1_{\mathfrak{A}},1_{\mathfrak{A}}\right)+ \left( 0,1_{\mathfrak{A}}\right) \right\|=\sqrt{5}$, but this is a contradiction. We now consider decompositions related to multi-norms. Suppose that $\left( \left( {\mathscr{X}}^n,\left\|\cdot\right\|_n\right):n\in\mathbb{N} \right) $ is a multi-normed space. We consider $P_i : \mathscr{X}\to\mathscr{X}_i\,\;(1\leq i\leq n)$ for the natural projections \cite[p. 21]{lance}. A direct sum decomposition $\mathscr{X}=\mathscr{X}_1\oplus\dots\oplus \mathscr{X}_n$ is \emph{small} if for all $x_1,\dots,x_n\in \mathscr{X}$ we have $\left\|P_1x_1+\dots+P_nx_n\right\|\leq \left\|\left(x_1,\dots,x_n\right)\right\|_n$, and the decomposition is \emph{orthogonal} if 
$\left\| (y_1,\dots, y_j)\right\|_j = \left\| (x_1,\dots, x_n)\right\|_n$, where $x_i\in \mathscr{X}_i\,\;(1\leq i\leq n)$ and for all partition $\{S_j:1\leq j\leq m\}$ of $\{1,\dots,n\}$ such that $y_j =\sum_{i\in S_j}x_i\,\;(1\leq j\leq m)$. Let $\mathscr{X}=\mathscr{X}_1\oplus\dots\oplus \mathscr{X}_n$ be an orthogonal decomposition of $\mathscr{X}$. Then in particular, $\left\| (x_1,\dots, x_n)\right\|_n=\left\|x_1+\dots+ x_n \right\|$. Similar to \cite[Theorem 7.20]{2012}, one can show that a small decomposition is always orthogonal and an orthogonal decomposition is hermitian. 

Now, we extend \cite[Theorems 7.29 and 7.30]{2012}, which were demonstrated for Hilbert spaces and the space $C(\mathbb{K})$ of all complex-valued,
continuous functions on the compact space $\mathbb{K}$.
\begin{theorem}\label{decomposition}
Suppose that $\mathscr{X}$ is a Hilbert $C^*$-module. Let $\mathscr{X} = \mathscr{X}_1\oplus\dots\oplus\mathscr{X}_n$ be a direct sum decomposition
of $\mathscr{X}$. Then the following statements are equivalent:
\begin{enumerate}
\item\label{1}
	For $1\leq i\neq j\leq n$, we have $\mathscr{X}_i\perp	\mathscr{X}_j$;
\item\label{2}
The decomposition is small with respect to $\left( \left\|\cdot\right\|_n^{\mathscr{X}}:n\in\mathbb{N}\right)$;
\item\label{3}
The decomposition is small with respect to $\left( \left\|\cdot\right\|_n^{*}:n\in\mathbb{N}\right)$;
\item\label{4}
The decomposition is small with respect to the $\left( \left\|\cdot\right\|_n^{\mathcal{P}\left(\mathfrak{A} \right) }:n\in\mathbb{N}\right)$;
\item\label{5}
The decomposition is orthogonal with respect to $\left( \left\|\cdot\right\|_n^{\mathscr{X}}:n\in\mathbb{N}\right)$;
\item\label{6}
The decomposition is orthogonal with respect to $\left( \left\|\cdot\right\|_n^{*}:n\in\mathbb{N}\right)$;
\item\label{7}
The decomposition is orthogonal with respect to $\left( \left\|\cdot\right\|_n^{\mathcal{P}\left(\mathfrak{A} \right) }:n\in\mathbb{N}\right)$;
\item\label{8}
The decomposition is hermitian.
\end{enumerate}
\end{theorem}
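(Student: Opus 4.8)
The plan is to prove the eight assertions mutually equivalent through the directed cycle
\[
(1)\Longrightarrow(2)\Longrightarrow(5)\Longrightarrow(8)\Longrightarrow(1),
\]
supplemented by the side chains $(1)\Longrightarrow(3)\Longrightarrow(6)\Longrightarrow(8)$ and $(1)\Longrightarrow(4)\Longrightarrow(7)\Longrightarrow(8)$; once $(8)\Longrightarrow(1)$ is in hand, every statement is reachable from every other. The implications ``small $\Rightarrow$ orthogonal $\Rightarrow$ hermitian'' recorded above the statement supply $(2)\Rightarrow(5)\Rightarrow(8)$, $(3)\Rightarrow(6)\Rightarrow(8)$ and $(4)\Rightarrow(7)\Rightarrow(8)$ at no cost, so the genuine work lies in $(1)\Rightarrow(2)$ (with $(1)\Rightarrow(3)$ and $(1)\Rightarrow(4)$ riding on it) and in $(8)\Rightarrow(1)$.

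For $(1)\Rightarrow(2)$ I would argue that if $\mathscr{X}_i\perp\mathscr{X}_j$ for all $i\neq j$, then for each $i$ the kernel of the natural projection $P_i$ equals $\bigoplus_{k\neq i}\mathscr{X}_k$ and is contained in $\mathscr{X}_i^{\perp}$; since $\mathscr{X}=\mathscr{X}_i\oplus\bigoplus_{k\neq i}\mathscr{X}_k=\mathscr{X}_i\oplus\mathscr{X}_i^{\perp}$, comparing complements forces $\ker P_i=\mathscr{X}_i^{\perp}$, so $P_i$ is the orthogonal projection onto $\mathscr{X}_i$. Thus $(P_i:1\leq i\leq n)$ is a family of mutually orthogonal projections in $\mathcal{L}(\mathscr{X})$ summing to $I_{\mathscr{X}}$, and \eqref{Hilbert multi norm} yields $\|P_1x_1+\dots+P_nx_n\|\leq\|(x_1,\dots,x_n)\|_n^{\mathscr{X}}$, i.e. the decomposition is small with respect to $\left(\|\cdot\|_n^{\mathscr{X}}\right)$. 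Feeding this into Proposition \ref{module}(1) and (2), which give $\|x\|_n^{\mathscr{X}}\leq\|x\|_n^{*}$ and $\|x\|_n^{\mathscr{X}}\leq\|x\|_n^{\mathcal{P}\left(\mathfrak{A}\right)}$, immediately delivers $(1)\Rightarrow(3)$ and $(1)\Rightarrow(4)$.

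The heart of the matter is $(8)\Rightarrow(1)$. First I would deduce from hermiticity that every partial--sum projection $P_S:=\sum_{k\in S}P_k$ $\left(S\subseteq\{1,\dots,n\}\right)$ is a contraction: writing $x=\sum_k P_kx$ with $P_kx\in\mathscr{X}_k$ and using the hermitian identity with $\alpha_k=e^{i\theta}$ for $k\in S$ and $\alpha_k=1$ otherwise gives $\bigl\|e^{i\theta}\sum_{k\in S}P_kx+\sum_{k\notin S}P_kx\bigr\|=\|x\|$ for every $\theta$, whence
\[
P_Sx=\frac{1}{2\pi}\int_0^{2\pi}e^{-i\theta}\Bigl(e^{i\theta}\sum_{k\in S}P_kx+\sum_{k\notin S}P_kx\Bigr)\,d\theta
\]
forces $\|P_Sx\|\leq\|x\|$. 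In particular each $P_i$ and each $I_{\mathscr{X}}-P_i=P_{\{1,\dots,n\}\setminus\{i\}}$ is a contractive idempotent bounded module map. The decisive, second step is to conclude that such an operator is an orthogonal projection; granting this, $\mathscr{X}_i^{\perp}=\ker P_i=\bigoplus_{k\neq i}\mathscr{X}_k\supseteq\mathscr{X}_j$ for $j\neq i$, which is exactly $(1)$. I expect this last step to be the main obstacle: unlike in the Hilbert space case it cannot be read off from a parallelogram law, and one cannot simply pass to pure states (a general pure state of $\mathfrak{A}$ need not be ``localizable''). Instead I would work at the level of $\mathfrak{A}$, using that contractivity of $P_i$ gives $\|xa\|=\|P_i\left((x+yb)a\right)\|\leq\|(x+yb)a\|$ for all $a,b\in\mathfrak{A}$ whenever $x\in\operatorname{ran}P_i$ and $y\in\ker P_i$, then choosing $b$ proportional to $\langle x,y\rangle^{*}$ and invoking the Cauchy--Schwarz inequality together with the positivity of the Gram element $\left(\begin{smallmatrix}\langle x,x\rangle & \langle x,y\rangle\\ \langle y,x\rangle & \langle y,y\rangle\end{smallmatrix}\right)$ in $\mathbb{M}_2\left(\mathfrak{A}\right)$ (and the symmetric inequality coming from $I_{\mathscr{X}}-P_i$) to force $\langle x,y\rangle=0$, hence $P_i=P_i^{*}$. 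Alternatively one may invoke the known fact that a topologically complemented submodule which is the range of a contractive projection is orthogonally complemented.
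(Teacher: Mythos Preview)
Your cycle of implications and the handling of $(1)\Rightarrow(2),(3),(4)$ via the definition of $\|\cdot\|_n^{\mathscr{X}}$ together with Proposition~\ref{module} match the paper exactly; so do the ``small $\Rightarrow$ orthogonal $\Rightarrow$ hermitian'' steps, which the paper likewise takes from \cite[Theorem~7.20]{2012}.

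The only real divergence is the implication $(8)\Rightarrow(1)$. You prove it indirectly: an integral average shows each $P_S$ is contractive, and then you invoke (and sketch) the nontrivial fact that a contractive idempotent module map must be an orthogonal projection. The paper instead attacks $(8)\Rightarrow(1)$ directly at the level of two elements: for $x\in\mathscr{X}_i$ and $y\in\mathscr{X}_j$ with $i\neq j$, hermiticity gives $\|x-y\|=\|x+y\|$, from which the authors read off $\langle x,y\rangle=-\langle y,x\rangle$; replacing $x$ by $ix$ yields $\langle x,y\rangle=\langle y,x\rangle$, hence $\langle x,y\rangle=0$. That is the whole argument---four lines, no projections, no auxiliary structural lemma.

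What each approach buys: the paper's route is dramatically shorter and keeps everything elementary. Your route is heavier but your caution is not misplaced---the step ``$\|x-y\|=\|x+y\|\Rightarrow\langle x,y\rangle+\langle y,x\rangle=0$'' that the paper states without comment is immediate in a Hilbert space via the parallelogram identity, but in a Hilbert $C^*$-module one only obtains equality of the \emph{norms} of the positive elements $\langle x\pm y,x\pm y\rangle$, not of the elements themselves (indeed $x=E_{11}$, $y=E_{12}$ in $M_2(\mathbb{C})$ satisfy $\|x+y\|=\|x-y\|$ with $\langle x,y\rangle+\langle y,x\rangle\neq 0$). So your remark that this ``cannot be read off from a parallelogram law'' is on target; the paper's one-line deduction tacitly needs more of the hermitian hypothesis (it holds for all $xa$, $yb$ with $a,b\in\mathfrak{A}$) than it displays. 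Your contractive-projection route avoids that delicate point at the cost of importing a separate structural result.
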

\begin{proof}
By using the technique applied in \cite[Theorem 7.20]{2012} and \eqref{Hilbert multi norm}, we have (\ref{1}) $\Rightarrow$ (\ref{2}) $\Rightarrow$ (\ref{5}) $\Rightarrow$ (\ref{8}).
	 
Suppose that $\mathscr{X} = \mathscr{X}_1\oplus\dots\oplus\mathscr{X}_n$ is a hermitian decomposition. Pick $x\in\mathscr{X}_i$ and $y\in\mathscr{X}_j$, where $1\leq i,j\leq n$. Since the decomposition is hermitian, we observe that $\left\|x-y \right\|= \left\|x+y \right\|$. it follows that $\left\langle x,y\right\rangle=-\left\langle y,x\right\rangle$. Similarly, we arrive at $\left\langle ix,y\right\rangle=-\left\langle y,ix\right\rangle$. Thus, $\left\langle x,y\right\rangle=0$ and condition (\ref{1}) is established.

 Let $\mathscr{X} = \mathscr{X}_1\oplus\dots\oplus\mathscr{X}_n$, where $\mathscr{X}_i$ are orthogonal complemented submodules. Theorem \ref{decomposition} and Proposition \ref{module} ensure that the decomposition is small with respect to $\left( \left\|\cdot\right\|^*_n:n\in\mathbb{N}\right)$ and $\left( \left\|\cdot\right\|^{\mathcal{P}\left(\mathfrak{A}\right)}_n:n\in\mathbb{N}\right)$. Therefore, (\ref{1}) $\Rightarrow$ (\ref{3}) $\Rightarrow$ (\ref{6}) $\Rightarrow$ (\ref{8}) $\Rightarrow$ (\ref{1}) and (\ref{1}) $\Rightarrow$ (\ref{4}) $\Rightarrow$ (\ref{7}) $\Rightarrow$ (\ref{8}) $\Rightarrow$ (\ref{1}). 
\end{proof}
The following corollaries are consequences of Theorem \ref{decomposition}.

\begin{corollary}\cite[Theorem 7.29]{2012}
	Let $\mathbb{K}$ be a compact space, and let $\mathscr{X}=C(\mathbb{K})$. It follows that $\mathscr{X}_i=C(\mathbb{K}_i) \,\;(1\leq i\leq n)$ for some partition $\left\lbrace \mathbb{K}_1,\dots,\mathbb{K}_n\right\rbrace $ of clopen subspaces of $\mathbb{K}$.
\end{corollary}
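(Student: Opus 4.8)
The plan is to regard $C(\mathbb{K})$ as a Hilbert module over the commutative $C^*$-algebra $\mathfrak{A}:=C(\mathbb{K})$, with inner product $\langle f,g\rangle=\overline{f}\,g$; then the closed submodules of $\mathscr{X}$ are exactly the closed ideals of $C(\mathbb{K})$, and every closed ideal has the form $J_U:=\{f\in C(\mathbb{K}):f|_{\mathbb{K}\setminus U}=0\}$ for a unique open set $U\subseteq\mathbb{K}$. First I would invoke Theorem \ref{decomposition}, which in particular gives $\mathscr{X}_i\perp\mathscr{X}_j$ for $i\neq j$ (this is its condition (1), equivalent to the decomposition being hermitian, or small/orthogonal with respect to any one of $\|\cdot\|_n^{\mathscr{X}}$, $\|\cdot\|_n^{*}$, $\|\cdot\|_n^{\mathcal{P}\left(\mathfrak{A}\right)}$).

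The main step is to pin down which ideals $J_U$ can occur as a summand $\mathscr{X}_i$, i.e.\ which $J_U$ are orthogonally complemented. Using a Urysohn function one checks that $J_U^{\perp}=J_{\mathbb{K}\setminus\overline{U}}$; consequently, if $J_U\oplus J_U^{\perp}=C(\mathbb{K})$ then every $h\in C(\mathbb{K})$ must vanish on the boundary $\partial U$, forcing $\partial U=\emptyset$, i.e.\ $U$ clopen. Conversely, for clopen $U$ the decomposition $h=h\chi_U+h\chi_{\mathbb{K}\setminus U}$ exhibits $J_U$ as complemented, and extension by zero identifies $J_U$, as a Hilbert module, with $C(U)$. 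Hence I may write $\mathscr{X}_i=J_{U_i}=C(U_i)$ with each $U_i$ clopen, allowing $U_i=\emptyset$ (which corresponds to a vanishing summand $\mathscr{X}_i=0$).

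It then remains to see that $\{U_1,\dots,U_n\}$ is a partition of $\mathbb{K}$. Directness of $\mathscr{X}=\mathscr{X}_1\oplus\cdots\oplus\mathscr{X}_n$ yields $J_{U_i}\cap J_{U_j}=J_{U_i\cap U_j}=0$, so $U_i\cap U_j=\emptyset$ for $i\neq j$; and $\mathscr{X}_1+\cdots+\mathscr{X}_n=\mathscr{X}$ yields $J_{U_1\cup\cdots\cup U_n}=C(\mathbb{K})$, so $\bigcup_{i=1}^{n}U_i=\mathbb{K}$. Putting $\mathbb{K}_i:=U_i$ gives the clopen partition $\{\mathbb{K}_1,\dots,\mathbb{K}_n\}$ of $\mathbb{K}$ with $\mathscr{X}_i=C(\mathbb{K}_i)$, as required. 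The one genuinely non-formal ingredient is the identification of the complemented closed ideals of $C(\mathbb{K})$ with the clopen subsets of $\mathbb{K}$ — the familiar correspondence between closed ideals of a commutative $C^*$-algebra and open subsets of its Gelfand spectrum, restricted to the open sets with empty boundary; everything after that is routine point-set bookkeeping, the only subtlety being to keep track of empty (clopen) parts arising from zero summands.
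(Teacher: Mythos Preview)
Your proposal is correct and follows the paper's intended route: the paper presents this corollary without proof, merely stating that it is a consequence of Theorem~\ref{decomposition}, and your argument does exactly that---invoke condition~(\ref{1}) of Theorem~\ref{decomposition} to obtain $\mathscr{X}_i\perp\mathscr{X}_j$, then translate via the standard ideal/open-set correspondence for $C(\mathbb{K})$ to produce the clopen partition. You have simply supplied the routine details that the paper leaves to the reader.
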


\begin{corollary}\cite[Theorem 7.30]{2012}
Suppose that $\mathscr{H}$ is a Hilbert space and that $\mathscr{H} = \mathscr{H}_1\oplus\dots\oplus\mathscr{H}_n$ is a direct sum decomposition
	of $\mathscr{H}$. Then the following statements are equivalent:
	\begin{enumerate}
		\item
	For $1\leq i\neq j\leq n$, we have $\mathscr{H}_i\perp	\mathscr{H}_j$;
		\item
		The decomposition is small with respect to the Hilbert multi-norm;
		\item
		The decomposition is orthogonal with respect to the Hilbert multi-norm;
		\item
		The decomposition is hermitian.
	\end{enumerate}
\end{corollary}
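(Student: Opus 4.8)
The plan is to obtain this as the scalar special case $\mathfrak{A}=\mathbb{C}$ of Theorem~\ref{decomposition}; the argument is one of identifying structures rather than of proving anything new.

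First I would set up the dictionary. A complex Hilbert space $\mathscr{H}$ is a Hilbert $\mathbb{C}$-module, $\mathcal{L}(\mathscr{H})$ is the algebra of all bounded linear operators on $\mathscr{H}$ (on a Hilbert space every bounded operator is adjointable), and $I_{\mathscr{H}}$ is the identity operator. Under this identification, the families $(P_i:1\le i\le n)$ of mutually orthogonal projections in $\mathcal{L}(\mathscr{H})$ summing to $I_{\mathscr{H}}$ are exactly the systems of orthogonal projections onto the summands of an orthogonal decomposition of $\mathscr{H}$ into $n$ (possibly zero) subspaces, so the Hilbert $C^*$-multi-norm $\left\|\cdot\right\|_n^{\mathscr{H}}$ of \eqref{Hilbert multi norm} coincides with the Hilbert multi-norm of \cite[Theorem 4.15]{2012}. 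Moreover, every closed subspace of $\mathscr{H}$ is orthogonally complemented, and if $\mathscr{H}=\mathscr{H}_1\oplus\cdots\oplus\mathscr{H}_n$ is a (possibly non-orthogonal) topological direct sum of closed subspaces, then the natural projections are bounded, hence adjointable; thus such a decomposition is a direct sum decomposition in the sense required by Theorem~\ref{decomposition}, and the relation $\mathscr{H}_i\perp\mathscr{H}_j$ is precisely its condition~(\ref{1}).

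Next I would invoke Theorem~\ref{decomposition} for $\mathscr{X}=\mathscr{H}$: its conditions (\ref{1}), (\ref{2}), (\ref{5}), and (\ref{8}) are equivalent, and under the dictionary above they say, respectively, that $\mathscr{H}_i\perp\mathscr{H}_j$ for all $i\neq j$, that the decomposition is small with respect to the Hilbert multi-norm, that it is orthogonal with respect to the Hilbert multi-norm, and that it is hermitian --- exactly the four assertions of the corollary, so the proof is complete. Note that one does not need the $\left\|\cdot\right\|_n^{*}$- or $\left\|\cdot\right\|_n^{\mathcal{P}\left(\mathfrak{A}\right)}$-versions of Theorem~\ref{decomposition}, which is fortunate since $\left\|\cdot\right\|_n^{*}$ need not coincide with the Hilbert multi-norm when the Hilbert dimension of $\mathscr{H}$ is small. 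The argument has no genuine obstacle; the only point that requires attention is the verification in the first paragraph that \eqref{Hilbert multi norm} collapses to the classical Hilbert multi-norm, i.e., that restricting the defining supremum to adjointable projections discards nothing on a Hilbert space.
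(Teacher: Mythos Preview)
Your proposal is correct and matches the paper's approach: the corollary is presented there without proof, simply as a consequence of Theorem~\ref{decomposition}, and you have spelled out precisely the specialization $\mathfrak{A}=\mathbb{C}$ (using conditions (\ref{1}), (\ref{2}), (\ref{5}), (\ref{8})) that the paper leaves implicit. The dictionary you set up is exactly what is needed, and your observation that only the $\left\|\cdot\right\|_n^{\mathscr{X}}$-part of Theorem~\ref{decomposition} is required is apt.
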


\medskip
\noindent \textit{Conflict of Interest Statement.} On behalf of all authors, the corresponding author states that there is no conflict of interest.\\

\noindent\textit{Data Availability Statement.} Data sharing not applicable to this article as no datasets were generated or analysed during the current study.
\medskip
\bibliographystyle{amsplain}

\begin{thebibliography}{99}

\bibitem{sajjad}
S. Abedi and M. S. Moslehian,
\textit{ Power-norms based on Hilbert $C^*$-modules}
, Rev. Real Acad. Cienc. Exactas Fis. Nat. Ser. A-Mat. \textbf{117} (2023), no. 7, 1--17.

\bibitem{ARA} Lj. Aramba\v{s}i\'{c}, \textit{Another characterization of Hilbert $C^*$-modules over compact operators}, J. Math. Anal. Appl. \textbf{344} (2008), no. 2, 735--740. 

\bibitem{ASA} M. B. Asadi, M. Frank, and Z. Hassanpour-Yakhdani, \textit{Frame-less Hilbert $C^*$-modules II}, Complex Anal. Oper. Theory \textbf{14} (2020), no. 2, Paper No. 32, 5 pp. 

\bibitem{compact} D. Baki\'{c} and B. Gulja\v{s}, \textit{Hilbert $C^*$-modules over $C^*$-algebras of compact operators}, Acta Sci. Math. (Szeged) \textbf{68} (2002), no. 1-2, 249--269. 

\bibitem{blasco} O. Blasco, \textit{Power-normed spaces}, Positivity \textbf{21} (2017), no. 2, 593--632. 

\bibitem{cabrera} M. Cabrera, J. Martinez, and A. Rodriguez, \textit{Hilbert modules revisited: Orthonormal bases and Hilbert--Schmidt operators}, Glasg. Math. J. \textbf{37} (1995), no. 1, 45--54.

\bibitem{hilbert} H. G. Dales, M. Daws, H. L. Pham, and P. Ramsden, \textit{Equivalence of multi-norms}, Dissertationes Math.\textbf{498} (2014), 54 pp. 

\bibitem{2012} H. G. Dales and M. E. Polyakov, \textit{Multi-normed spaces}, Dissertationes Math. \textbf{488} (2012), 165 pp.

\bibitem{FRA} M. Frank, \textit{Geometrical aspects of Hilbert $C^*$-modules}, Positivity \textbf{3} (1999), no. 3, 215--243. 

\bibitem{lance} E. C. Lance, \textit{Hilbert $C^*$-modules: a toolkit for operator algebraists}, Vol. 210, Cambridge University Press, 1995.

\bibitem{Manu} V. M. Manuilov and E. V. Troitsky, \textit{Hilbert $C^*$-modules}, Translated from the 2001 Russian original by the authors. Translations of Mathematical Monographs, 226. American Mathematical Society, Providence, RI, 2005. 

\bibitem{mor} G. J. Murphy, \textit{$C^*$-algebras and operator theory}, Academic Press, Boston, 1990.


\end{thebibliography}

\end{document}